\documentclass[a4paper, reqno, 10pt]{amsart}
\usepackage{amssymb,latexsym, mathdots}
\usepackage{graphicx}
\usepackage{color}

\textwidth=6in \textheight=8.5in \oddsidemargin=0cm
\evensidemargin=0cm \topmargin=1cm
\newtheorem{theorem}{Theorem}
\newtheorem{definition}{Definition}

\newtheorem{corollary}{Corollary}
\newtheorem{lemma}{Lemma}


\begin{document}
\title[Hook Shape Lattice Path Matroid Polytopes]{Toric $g$-polynomials of hook shape lattice Path Matroid Polytopes and product of simplices}

\author{Sen-Peng Eu}
\address{Department of Mathematics \\
National Taiwan Normal University \\
Taipei, Taiwan 116, ROC} \email[Sen-Peng Eu]{speu@math.ntnu.edu.tw}

\author{Yuan-Hsun Lo}
\address{School of Mathematical Science \\
Xiamen University \\
Xiamen, 361005, PRC} \email[Yuan-Hsun Lo]{yhlo0830@gmail.com}

\author{Ya-Lun Tsai}
\address{Department of Applied Mathematics \\
National Chung Hsing University \\
Taichung, Taiwan 402, ROC} \email[Ya-Lun Tsai]
{yltsai@nchu.edu.tw}

\date{\today}

\begin{abstract}
It is known that a lattice path matroid polytope can be associated
with two given noncrossing lattice paths on
$\mathbb{Z}\times\mathbb{Z}$ with the same end points. In this short
note we give explicit formulae for the $f$-vector, toric $f$- and
 $g$-polynomials of a lattice path matroid polytope when two
boundary paths enclose a hook shape.
\end{abstract}

\subjclass[2010]{52B12, 52B40}

\keywords{Lattice path matroid polytope, $f$-vector, toric $h$-vector}

\thanks{Partially supported by National Science Council, Taiwan under grants MOST 104-2115-M-003-014-MY3 (S.-P. Eu) and MOST 104-2115-M-005-004 (Y.-L. Tsai).}


\maketitle


\section{Introduction}
Lattice path matroid polytopes are those polytopes defined from a family of lattice paths bounded by two nonintersecting paths with the same endpoints.
In this paper we characterize the shape and compute its $f$-vector, toric $f$- and $g$-polynomial when the region
bounded by two paths is a hook shape.

Though only the hook shape cases are considered, the explicit
formula of the toric $g$-polynomial (see Theorem 5) obtained is
surprisingly neat and even in such simple cases the computations are
not trivial. As we are not able to find a direct proof in the
literature, we are happy to write it down `from scratch'. In the
following we give preliminary background needed for the rest.

\subsection{Lattice path matroid}
A \emph{matroid} $M$ is a finite collection $\mathcal{S}$ of subsets, called \emph{independent sets}, of $[n]:=\{1,2,\dots, n\}$ satisfying the following conditions:
\begin{enumerate}
\item $\emptyset \in \mathcal{S}$.
\item If $A\in \mathcal{S}$ and $B\subset A$, then $B\in \mathcal{S}$.
\item If $A,B \in \mathcal{S}$ and $|A|=|B|+1$, then there exists $x\in A\backslash B$ such that $B\cup \{x\}\in S$.
\end{enumerate}

A \emph{base} of a matroid $M$ is defined to be a set of a maximal independent sets.
We denote the set of bases by $\mathcal{B}$.

In 2003, Bonin et al.~\cite{BMN1} proposed the notion of lattice path matroid, while almost at the same time Ardilla~\cite{A} also independently proposed the notion of \emph{Catalan matroid}, which turned out to be a special case of lattice path matroid.
Fix two noncrossing lattice paths $\pi_1, \pi_2$ (with $\pi_1$ never going below $\pi_2$) on $\mathbb{Z}\times \mathbb{Z}$ from $(0,0)$ to $(s,t)$ using east and north steps, Bonin et al. proved the following theorem:

\begin{theorem}[\cite{BMN1}]
Each path $\sigma$ of the set of lattice paths from $(0,0)$ to
$(s,t)$ staying the region bounded by $\pi_1$ and $\pi_2$
corresponds to a base of a matroid $M_{\pi_1,\pi_2}$. In fact, set
$\sigma=\sigma_1 \sigma_2\dots \sigma_{s+t}$ with $\sigma_i$ being
the up step $\mathsf{N}=(0,1)$ or east step $\mathsf{E}=(1,0)$, then
the set of those $\mathsf{N}$ steps in $\sigma$ is a base of
$M_{\pi_1,\pi_2}$.
\end{theorem}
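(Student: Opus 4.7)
The plan is to parameterise valid lattice paths by the positions of their $\mathsf{N}$-steps and then verify the basis-exchange axiom. Write the two boundary paths with $\mathsf{N}$-steps at indices $a_1<\cdots<a_t$ for $\pi_1$ and $b_1<\cdots<b_t$ for $\pi_2$; since $\pi_1$ never crosses below $\pi_2$, one has $a_k\le b_k$ for every $k$. For an arbitrary lattice path $\sigma$ from $(0,0)$ to $(s,t)$, set $B(\sigma):=\{i\in[s+t]:\sigma_i=\mathsf{N}\}$ and write $B(\sigma)=\{x_1<\cdots<x_t\}$. Immediately after its $k$-th $\mathsf{N}$-step the path $\sigma$ sits at the lattice point $(x_k-k,\,k)$, so $\sigma$ lies in the region bounded by $\pi_1$ and $\pi_2$ if and only if
\[
a_k\le x_k\le b_k \quad\text{for all } k=1,\dots,t.
\]

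Let $\mathcal{B}$ denote the collection of sets $B(\sigma)$ so obtained. To prove the theorem I would show that $\mathcal{B}$ satisfies the basis-exchange axiom: for any $B=\{x_1<\cdots<x_t\}$ and $B'=\{y_1<\cdots<y_t\}$ in $\mathcal{B}$ and any $x_p\in B\setminus B'$, produce $y_q\in B'\setminus B$ such that $(B\setminus\{x_p\})\cup\{y_q\}$ still admits an increasing enumeration $z_1<\cdots<z_t$ with $a_k\le z_k\le b_k$. Taking downward closures of $\mathcal{B}$ then yields the independent-set system of a matroid $M_{\pi_1,\pi_2}$ whose bases are exactly the $B(\sigma)$, which is precisely what the theorem asserts.

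The substantial step is the basis-exchange verification, as a naive swap of $x_p$ for some $y_j$ typically breaks the sorted order and hence disrupts the index-by-index interval constraints at positions other than $p$. My workaround is to track the symmetric difference $B\triangle B'$ from index $p$ outwards: pick the nearest index $q$ with $y_q\notin B$, and show, using the strict monotonicity of both $(a_k)$ and $(b_k)$, that the intermediate entries can be shifted one slot at a time without leaving their intervals. A cleaner and more conceptual alternative that I would likely present is to identify $\mathcal{B}$ with the set of transversals of the interval system $\{[a_k,b_k]\}_{k=1}^{t}$: one direction is immediate via the assignment $x_k\mapsto k$, while the converse is a short pigeonhole argument using the monotonicity of $(a_k)$ and $(b_k)$ (if some $y_k$ were $>b_k$ then the $t-k+1$ elements $y_k,\dots,y_t$ would all have to be assigned to indices in $\{k+1,\dots,t\}$, a set of size $t-k$). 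With this identification one invokes the classical theorem of Edmonds and Fulkerson that the transversals of any finite set system form the bases of a matroid, which bypasses the explicit exchange construction altogether.
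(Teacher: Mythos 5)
The paper offers no proof of this statement at all---it is quoted as a known result from Bonin--de Mier--Noy \cite{BMN1}---so there is no internal argument to compare against; I can only assess your proposal on its own terms, and it is correct. Your ``cleaner alternative'' is in fact essentially the argument of the cited source: the coordinate computation showing that $\sigma$ stays in the region if and only if $a_k\le x_k\le b_k$ for all $k$, the pigeonhole lemma identifying these sets with the transversals of the interval system $\{[a_k,b_k]\}_{k=1}^{t}$ (both directions of which you state correctly, the case $y_k<a_k$ being symmetric to the one you spell out), and the Edmonds--Fulkerson theorem that transversals form the bases of a matroid. Together these exhibit $M_{\pi_1,\pi_2}$ as a transversal matroid presented by the intervals, which is exactly the claim. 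The only soft spot is your first route: the direct basis-exchange verification is genuinely delicate (as you note, a single swap disrupts the sorted interval constraints), and the ``shift one slot at a time'' repair is left as a sketch rather than carried out. Since you explicitly prefer and complete the transversal identification, the proof stands; I would simply drop the exchange sketch or relegate it to a remark.
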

A matroid is a \emph{lattice path matroid} if it is isomorphic to
$M_{\pi_1,\pi_2}$. Readers may refer to ~\cite{A, B, BMN1, BMN2} for
more information on lattice path matroids.

For example, fix $\pi_1=\mathsf{NENE}$ and $\pi_2=\mathsf{EENN}$, there are five lattice paths bounded by $\pi_1, \pi_2$, see Figure 1.
The path $\mathsf{NENE}$ corresponds to the base $13$ (shorthand of $\{1,3\}$); and $\mathsf{NEEN}$ to $14$, etc.
The set of bases of $M_{\pi_1,\pi_2}$ is $\mathcal{B}=\{13, 14, 23,24,34\}.$

\begin{figure}[h]
\includegraphics[width=3in]{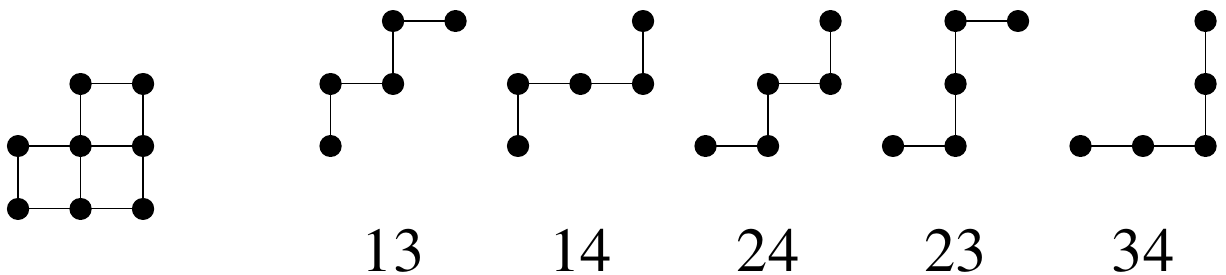}
\caption{Bases of the matroid $M_{\pi_1,\pi_2}$ with $\pi_1=\mathsf{NENE}$ and $\pi_2=\mathsf{EENN}$}
\end{figure}

%
%
%
%

\subsection{Lattice path matroid polytope}
From a matroid $M$ one can define a \emph{matroid polytope} $P_M$
in the following ways. Let $B=\{ \sigma_1,\sigma_2, \dots ,\sigma_r
\} \in \mathcal{B}$ and the \emph{incidence vector} $\mathbf{e}_B$
of $B$ by $\mathbf{e}_B:=\sum_{i=1}^r\mathbf{e}_{\sigma_i}$, where
$\mathbf{e}_j$ is the $j$-th standard unit vector of $\mathbb{R}^n$.
We define $$P_M:=\mbox{conv}\{\mathbf{e}_B: B\in \mathcal{B}\},$$
the convex hull of all incidence vectors.

Hence one can consider the \emph{lattice path matroid polytope}
defined from a lattice path matroid. For instance, take
$M=M_{\pi_1,\pi_2}$ in the above example. For $B=13$ (shorthand for
$\{1,3\}$) we have
$\mathbf{e}_B=\mathbf{e}_1+\mathbf{e}_3=(1,0,1,0)$. Hence the
desired polytope is
$$P_M=\mbox{conv}\{(0,0,1,1), (0,1,0,1), (0,1,1,0), (1,0,0,1), (1,0,1,0)\},$$
which is a pyramid as shown in Figure 2.

\begin{figure}[h]
\includegraphics[width=1.7in]{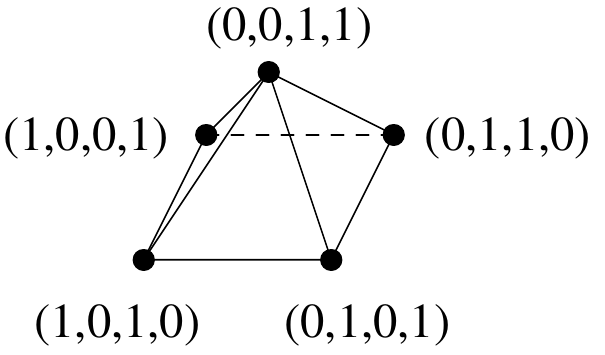}
\caption{The lattice path matroid polytope $P_{2,2}$}
\end{figure}

To our knowledge there are only few results on lattice path matroid polytope~\cite{B, K}.
In this paper we focus on the cases that the region bounded by $\pi_1$ and $\pi_2$ is a hook shape.
We denote such a  polytope $P_{\alpha, \beta}$ if $\pi_2$ goes first with $\alpha$ straight $\mathsf{E}$ steps then $\beta$ $\mathsf{N}$ steps.
That is, the hook shape corresponds to the partition $\lambda=(\alpha,1,1,\dots 1)$ with $\beta-1$ one's.
Our example is the $P_{2,2}$.

For $0\le k\le n$, let $f_k$ be the number of $k$-dimensional faces of the polytope $P$ and  call $(f_0,f_1,\dots , f_n)$ the \emph{$f$-vector} of $P$.
The first result is to characterize the shape of $P_{\alpha, \beta}$ and compute the $f$-vector.
Denote the $n$-dim simplex by $\Delta_n$.

\begin{theorem}\label{main1}
We have
\begin{enumerate}
\item The $P_{\alpha, \beta} \subset \mathbb{R}^{\alpha+\beta-1}$ is a pyramid with the basis of the Cartesian product of the simplices $\Delta_{\alpha-1}\subset \mathbb{R}^{\alpha-1}$ and $\Delta_{\beta-1} \subset \mathbb{R}^{\beta-1}$.
\item Let $(f_{0},\dots,f_{\alpha+\beta-1})$ denote the $f$-vector of the $P_{\alpha, \beta}$.
Let $r_{-1}:=1$ and
$$ r_{i}:=\sum_{k=1}^{i+1}{\alpha \choose k}{\beta \choose i+2-k},$$
then for $0\le i\le \alpha+\beta-1$ we have $f_{i}=r_{i}+r_{i-1}$.
\item All edges of $P_{\alpha, \beta}$ has the length $\sqrt{2}$.
\item The diameter of $P_{\alpha, \beta}$ is $2$.
\end{enumerate}
\end{theorem}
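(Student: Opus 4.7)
My plan is to reduce everything to a clean combinatorial description of the bases of $M_{\pi_1,\pi_2}$ and then read off each part from it. Tracing the upper boundary path $\pi_1$ of the hook, a lattice path from $(0,0)$ to $(\alpha,\beta)$ stays weakly below $\pi_1$ iff its second $\mathsf{N}$-step occurs at position $\geq \alpha+1$. Equivalently, the bases are the $\beta$-subsets $S=\{s_1<s_2<\dots<s_\beta\}\subseteq[\alpha+\beta]$ with $s_2\geq\alpha+1$, i.e.\ those meeting $\{1,\dots,\alpha\}$ in at most one element. These split into $\alpha\beta$ \emph{Type A} bases of the form $\{i\}\cup\bigl(\{\alpha+1,\dots,\alpha+\beta\}\setminus\{k\}\bigr)$ with $(i,k)\in[\alpha]\times\{\alpha+1,\dots,\alpha+\beta\}$, together with the single \emph{Type B} base $\{\alpha+1,\dots,\alpha+\beta\}$; this already accounts for $\alpha\beta+1$ vertices.

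For part (1), let $v_B$ denote the Type B incidence vector. After translating by $-v_B$, each Type A vertex becomes $\mathbf{e}_i-\mathbf{e}_k$ and the Type B vertex becomes the origin. Because $\{\mathbf{e}_1,\dots,\mathbf{e}_\alpha\}$ and $\{\mathbf{e}_{\alpha+1},\dots,\mathbf{e}_{\alpha+\beta}\}$ span orthogonal coordinate subspaces, the affine map $(u,v)\mapsto u-v$ restricted to $\Delta_{\alpha-1}\times\Delta_{\beta-1}$ is an affine isomorphism onto the convex hull of the Type A vertices, so that hull is affinely equivalent to $\Delta_{\alpha-1}\times\Delta_{\beta-1}$. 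Since every point of it has its first $\alpha$ coordinates summing to $1$ while the origin has them summing to $0$, the apex lies off the affine hull of the base; hence $P_{\alpha,\beta}$ is a genuine pyramid over $\Delta_{\alpha-1}\times\Delta_{\beta-1}$, of dimension $\alpha+\beta-1$.

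Part (2) then follows from the standard pyramid identity $f_i=f_i(Q)+f_{i-1}(Q)$ (with $f_{-1}(Q):=1$ accounting for the apex), the product formula $f_i(Q_1\times Q_2)=\sum_{a+b=i}f_a(Q_1)f_b(Q_2)$, and $f_a(\Delta_{m-1})=\binom{m}{a+1}$; these combine to give $f_i(\Delta_{\alpha-1}\times\Delta_{\beta-1})=\sum_{k=1}^{i+1}\binom{\alpha}{k}\binom{\beta}{i+2-k}=r_i$. For part (3) I will invoke the Gel'fand--Goresky--MacPherson--Serganova characterization, which says two vertices of any matroid polytope are adjacent iff the corresponding bases differ by a single exchange; so every edge has $|B_1\triangle B_2|=2$ and Euclidean length $\sqrt{2}$. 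For part (4), the condition $s_2\geq\alpha+1$ forces $|S\cap\{\alpha+1,\dots,\alpha+\beta\}|\geq\beta-1$ for every base, so any two bases agree on at least $\beta-2$ of those elements; thus $|S_1\triangle S_2|\leq 4$ and the Euclidean diameter is at most $2$, with equality attained for instance by $\{1,\alpha+1,\dots,\alpha+\beta-1\}$ and $\{2,\alpha+2,\dots,\alpha+\beta\}$.

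The main obstacle is the very first step: carefully inspecting $\pi_1$ and showing that admissibility of a lattice path reduces to the clean condition $s_2\geq\alpha+1$. Once that is in hand, parts (1)--(4) fall out of standard polytopal identities together with the classical edge description of matroid polytopes.
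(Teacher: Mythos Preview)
Your argument is correct and follows the same overall plan as the paper: identify the special base corresponding to the apex, show that the remaining $\alpha\beta$ vertices span an affine copy of $\Delta_{\alpha-1}\times\Delta_{\beta-1}$, and then read off the $f$-vector from the pyramid and product formulas. Your affine-map description $(u,v)\mapsto u-v$ is a tidier way of packaging what the paper does by a bare-hands convex-combination computation.

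Two small differences are worth flagging. For (3) the paper simply enumerates the three possible edge types and checks each has length $\sqrt{2}$; your appeal to the Gel'fand--Goresky--MacPherson--Serganova edge description is slicker and gives the result for all matroid polytopes at once. For (4), the paper's one-line proof (``the diameter of $\Delta_m\times\Delta_n$ is clearly $2$ and also the pyramid'') is most naturally read as a statement about the graph diameter of the $1$-skeleton, whereas you compute the Euclidean diameter. Your bound $|S_1\triangle S_2|\le 4$ actually handles both readings: together with the basis-exchange property it shows any two vertices are joined by at most two edges in the $1$-skeleton, and it directly gives Euclidean distance $\le 2$. Either way, the example you produce (which needs $\alpha,\beta\ge 2$) shows the bound is sharp.
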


\subsection{Toric $h$-vector}
From the $f$-vector of a $d$-dimensional polytope $P$ one can
calculate the $h$-vector by $$h_i=\sum_{j=0}^i(-1)^{i-j}{d-j \choose
i-j}f_{j-1},$$ which plays an important role in analyzing the
polytope. It is well known that when $P$ is simplicial, one has a
nice symmetric property.

\begin{theorem} [Dehn-Sommerville equations, see~\cite{Z}]
The $h$-vector of the boundary of a simplical $d$-polytope $P$ satisfies $$h_i=h_{d-i}.$$
\end{theorem}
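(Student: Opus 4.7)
The plan is to prove the Dehn-Sommerville equations via the theory of shellings of simplicial polytope boundaries, which gives the cleanest combinatorial derivation of the symmetry. By the Bruggesser-Mani theorem, the boundary $\partial P$ of any convex $d$-polytope admits a \emph{line shelling}: pick a line $\ell$ piercing the interior of $P$ in sufficiently general position, and order the facets by the moment each becomes visible to a point travelling along $\ell$ to infinity and then returning from the opposite side. Call the resulting shelling $F_1,F_2,\ldots,F_s$. The key geometric observation is that traversing $\ell$ in the opposite sense produces exactly the reversed order $F_s,F_{s-1},\ldots,F_1$, so the reversed sequence is again a shelling.

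For each facet $F_j$, which is a $(d-1)$-simplex on $d$ vertices since $P$ is simplicial, define the restriction
$$R(F_j) \;=\; \{\,v\in F_j : F_j\setminus\{v\}\subseteq F_1\cup\cdots\cup F_{j-1}\,\}.$$
A standard lemma in the theory of shellable complexes, essentially due to McMullen, identifies the entries of the $h$-vector as
$$h_i \;=\; \#\{\,j : |R(F_j)| = i\,\}, \qquad 0\le i\le d.$$
Because the $h$-vector is defined from the $f$-vector alone it does not depend on the chosen shelling, so one may apply the same counting formula to the reversed shelling and obtain a restriction map $R'$ with $h_i=\#\{j:|R'(F_j)|=i\}$. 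A short combinatorial check will show $R'(F_j)=V(F_j)\setminus R(F_j)$, hence $|R'(F_j)|=d-|R(F_j)|$, whence
$$h_i \;=\; \#\{\,j:|R(F_j)|=i\,\} \;=\; \#\{\,j:|R'(F_j)|=d-i\,\} \;=\; h_{d-i},$$
completing the proof.

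The main step to verify is the identity $R'(F_j)=V(F_j)\setminus R(F_j)$. The idea is to consider, for each vertex $v\in V(F_j)$, the codimension-one face $G=F_j\setminus\{v\}$. Since $\partial P$ is a pure $(d-1)$-complex in which every $(d-2)$-face lies in exactly two facets, $G$ lies in exactly one other facet $F_{j'}$. If $j'<j$ then $v\in R(F_j)$ but $v\notin R'(F_j)$ (because after reversal $F_{j'}$ comes later), while if $j'>j$ the roles interchange. This dichotomy immediately yields the complementation identity. The main obstacle is producing a self-contained proof that the reverse of the Bruggesser-Mani shelling is a shelling; the geometric route via reversing the direction on $\ell$ sidesteps this cleanly, provided one has set up the shelling definition carefully and invoked the genericity condition on $\ell$. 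No delicate arithmetic with the binomial formula for $h_i$ is needed once the shelling correspondence is in place.
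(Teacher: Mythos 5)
Your argument is correct: it is the standard Bruggesser--Mani line-shelling proof, using McMullen's identification $h_i=\#\{j:|R(F_j)|=i\}$ for any shelling together with the fact that reversing the orientation of the generic line reverses the shelling order and complements each restriction set. The paper itself offers no proof of this classical statement, merely citing~\cite{Z}, and your route is essentially the one given in that reference, so there is nothing to reconcile; the only step worth spelling out in full is the one you flag, namely that the reverse of a line shelling is again a (line) shelling for the oppositely oriented line.
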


However the lattice path matroid polytope is usually not simplical, as in the case of $P_{2,2}$, and the Dehn-Sommerville equations are not applicable.
Motivated from algebraic geometry, for any finite graded poset with $\hat{0}$ and $\hat{1}$ Stanley introduced the \emph{toric $h$-vector} and \emph{toric $g$-vector}.
The motivation for introducing toric $h$-vectors is to correspond to the Betti numbers of the intersection cohomology of toric varieties associated to rational polytopes.
Readers may see~\cite{S} for more information.

We begin by defining the toric $f$-polynomial and toric $g$-polynomial.
Let $\mathcal{P}$ be the face lattice of a convex polytope $P$ in $\mathbb{R}^{n}$.
Hence $\mathcal{P}$ is graded of rank $n+1$ with the rank function
$\rho$.
Let $\widetilde{\mathcal{P}}$ be the set of all intervals
$[\emptyset,y]$ for all $y$ in $\mathcal{P}$, ordered by inclusion.
The map $\mathcal{P}\to \widetilde{\mathcal{P}}$ by $y \mapsto
[\emptyset,y]$ is an isomorphism of posets and hence
$\widetilde{\mathcal{P}}$ inherits the same rank function $\rho$.

\begin{definition}\label{def}
Given an finite graded poset $\mathcal{P}$, the toric $f$-polynomial
$f(\mathcal{P},x)$ and toric $g$-polynomial $g(\mathcal{P},x)$ are
defined inductively as follows.
\begin{enumerate}
\item $f(\mathbf{1},x)=g(\mathbf{1},x)=1$, where $\mathbf{1}$ is the poset of the single vertex.
\item If rank of $\mathcal{P}$ is $n+1>0$, then $f(\mathcal{P},x)$ has degree $n$. Suppose $f(\mathcal{P},x)=h_{0}+h_{1}x+\cdots+h_{n}x^{n}$,
then define
$$g(\mathcal{P},x):=h_{0}+(h_{1}-h_{0})x+\cdots+(h_{m}-h_{m-1})x^{m},$$
where $m=\lfloor \frac{n}{2}\rfloor$.
\item If rank of $\mathcal{P}$ is $n+1>0$, then define
$$f(\mathcal{P},x):=\sum_{\mathcal{Q} \in \widetilde{\mathcal{P}}, \mathcal{Q} \ne \mathcal{P}}g(\mathcal{Q},x)(x-1)^{n-\rho(\mathcal{Q})}.$$
 \end{enumerate}
\end{definition}

We call $(h_{0},h_{1},\dots,h_{n})$ the \emph{toric $h$-vector} and
$(g_{0},g_{1},\dots,g_{n})$ the \emph{toric $g$-vector} of
$\mathcal{P}$. The following result, extending the Dehn-Sommerville
equations, is the core result about toric $h$-vector which states
that the entries are symmetric if $\mathcal{P}$ is Eulerian.

\begin{theorem}[\cite{S}]\label{thm:toric_sym}
Let $\mathcal{P}$ be Eulerian posets of rank $n+1$, Then have
$$h_{i}=h_{n-i}.$$
\end{theorem}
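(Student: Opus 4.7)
The plan is to prove the symmetry $h_i = h_{n-i}$ by induction on the rank $n+1$ of $\mathcal{P}$. Equivalently, I want to establish the polynomial identity $x^n f(\mathcal{P}, 1/x) = f(\mathcal{P}, x)$. The base case $n = 0$ is immediate from $f(\mathbf{1}, x) = 1$.

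For the inductive step, I would substitute the recursion in item (3) of Definition 1 into both sides. Starting from $f(\mathcal{P}, x) = \sum_{\mathcal{Q} \subsetneq \mathcal{P}} g(\mathcal{Q}, x)(x-1)^{n-\rho(\mathcal{Q})}$, a direct algebraic manipulation yields
$$x^n f(\mathcal{P}, 1/x) = \sum_{\mathcal{Q} \subsetneq \mathcal{P}} (-1)^{n-\rho(\mathcal{Q})} x^{\rho(\mathcal{Q})} g(\mathcal{Q}, 1/x)(x-1)^{n-\rho(\mathcal{Q})},$$
so the symmetry reduces to the polynomial identity
$$\sum_{\mathcal{Q} \subsetneq \mathcal{P}} \left[g(\mathcal{Q}, x) - (-1)^{n-\rho(\mathcal{Q})} x^{\rho(\mathcal{Q})} g(\mathcal{Q}, 1/x)\right] (x-1)^{n-\rho(\mathcal{Q})} = 0.$$

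The Eulerian hypothesis enters here through the Möbius identity $\sum_{y \le z \le \hat 1}(-1)^{\rho(z)-\rho(y)} = 0$ for every $y \ne \hat 1$. This alternating sum is what will drive the cancellation once the outer sum over proper intervals is regrouped by a distinguished element. Since every closed interval $[\hat 0, y]$ inside an Eulerian poset is itself Eulerian and has rank strictly less than $n+1$, the inductive hypothesis can be applied on each such interval, allowing the $g$-polynomials inside the sum to be reinterpreted through the symmetric $f$-polynomials of $[\hat 0, y]$ and of the dual interval $[y, \hat 1]$ (also Eulerian).

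The principal obstacle is that $g(\mathcal{Q}, x)$ is, by construction, truncated to degree $\lfloor(\rho(\mathcal{Q})-1)/2\rfloor$ and records only the lower half of the toric $h$-vector, so the reflection $x^{\rho(\mathcal{Q})} g(\mathcal{Q}, 1/x)$ lives in the upper half and cannot cancel $g(\mathcal{Q}, x)$ term-by-term; all cancellation must come from the global structure of the sum, mediated by the Eulerian identity. Carrying this out demands a careful reindexing of the double sum, so that each face $y$ contributes a packet whose total vanishes, and aligning the different powers of $(x-1)$ coming from intervals of different ranks is the delicate bookkeeping that supplies the depth of Stanley's original argument in~\cite{S}.
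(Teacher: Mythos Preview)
The paper does not prove this theorem at all: it is quoted verbatim from Stanley~\cite{S} and used as a black box. There is therefore no ``paper's own proof'' to compare your proposal against.

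As for the proposal itself, what you have written is a correct \emph{setup} but not a proof. You reduce the symmetry to the vanishing of
\[
\sum_{\mathcal{Q} \subsetneq \mathcal{P}} \Bigl[g(\mathcal{Q}, x) - (-1)^{n-\rho(\mathcal{Q})} x^{\rho(\mathcal{Q})} g(\mathcal{Q}, 1/x)\Bigr] (x-1)^{n-\rho(\mathcal{Q})},
\]
correctly identify that the Eulerian condition enters through the alternating-sum M\"obius identity, and correctly flag the obstacle that the truncation in the definition of $g$ prevents term-by-term cancellation. But then you stop, saying only that ``carrying this out demands a careful reindexing'' and that this bookkeeping ``supplies the depth of Stanley's original argument.'' That sentence is precisely where the proof would have to begin: you have not shown how to regroup the sum, nor produced the packet attached to each $y$ that actually vanishes, nor explained how the induction hypothesis on smaller intervals converts the reflected $g$-terms into something that the Eulerian identity can kill. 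In its current form the proposal is an accurate diagnosis of what needs to happen, followed by a citation back to~\cite{S} for the execution; it does not stand on its own as a proof.
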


Since the face lattice of a convex polytope, say $P_{\alpha,\beta}$, is a Eulerian poset, by the theorem the entries of its toric $h$-vector are symmetric.
Also, note that in this case $f(\mathcal{P},x)$ is uniquely determined by $g(\mathcal{P},x)$.

Given $\mathcal{P}$, it is usually not easy to compute explicitly
the toric $h$- or toric $g$-vector. To our knowledge there are very
few examples~\cite{S} (Section 3.16 and Exercises 3.176, 3.177). As
Stanley noted in~\cite{S}, $f(\mathcal{P},x)$ `seems to be an
exceedingly subtle invariant of $\mathcal{P}$'.

Our main theorem of this paper is a surprising neat formula for the toric $g$-vector of (the face lattice of) $P_{\alpha, \beta}$.

\begin{theorem}\label{main2}
Let $1 \le \beta \le \alpha$, and $g_{\alpha,\beta}(x):=g(P_{\alpha,
\beta},x)$ be the toric $g$-polynomial of the $P_{\alpha, \beta}$.
We have
$$ g_{\alpha,\beta}(x)=\sum_{k=0}^{\beta-1}{\alpha-1\choose k} {\beta-1\choose k}x^{k}. $$
\end{theorem}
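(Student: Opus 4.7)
The plan is to combine Theorem~2(1) with a \emph{pyramid lemma} and then a direct computation of $g$ for a product of two simplices. Theorem~2(1) identifies $P_{\alpha,\beta}$ as a pyramid with basis $\Delta_{\alpha-1}\times\Delta_{\beta-1}$, so it suffices to establish (i) $g(\mathrm{Pyr}(Q),x)=g(Q,x)$ for every convex polytope $Q$, and (ii) $g(\Delta_a\times\Delta_b,x)=\sum_{k=0}^{\min(a,b)}\binom{a}{k}\binom{b}{k}x^k$.

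For the pyramid lemma (i), I would induct on $\dim Q$. The face poset of $\mathrm{Pyr}(Q)$ decomposes cleanly into the faces of the base $Q$ together with pyramidal faces $\mathrm{Pyr}(G)$ for $G$ a face of $Q$ (the apex being $\mathrm{Pyr}(\emptyset)$). By the inductive hypothesis, $g(\mathrm{Pyr}(G),x)=g(G,x)$ for every proper $G$, and splitting the defining sum for $f(\mathrm{Pyr}(Q),x)$ according to this decomposition yields the clean identity
\[
f(\mathrm{Pyr}(Q),x) \;=\; g(Q,x)\,+\, x\cdot f(Q,x).
\]
A coefficient comparison, using the Dehn--Sommerville symmetry of $f(Q,x)$ afforded by Theorem~4, shows that $g(\mathrm{Pyr}(Q),x)=g(Q,x)$.

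For (ii), I induct on $a+b$; the base case is a simplex ($g=1$). In the inductive step, every proper face of $\Delta_a\times\Delta_b$ is a product $\Delta_i\times\Delta_j$ with $(i,j)\neq(a,b)$, appearing with multiplicity $\binom{a+1}{i+1}\binom{b+1}{j+1}$, and by the inductive hypothesis its $g$-polynomial is $p_{i,j}(x):=\sum_k\binom{i}{k}\binom{j}{k}x^k$. Feeding this into the recursion expresses $f(\Delta_a\times\Delta_b,x)$ as a triple sum in $(i,j,k)$, which I would organize via the generating polynomial
\[
T_a(z,u) \;:=\; \sum_{i=0}^{a}\binom{a+1}{i+1}(1+z)^i\, u^{a-i} \;=\; \frac{(u+1+z)^{a+1}-u^{a+1}}{1+z}.
\]
Up to the boundary terms $(x-1)^{a+b}$ and $p_{a,b}(x)$, the recursion then collapses into a diagonal coefficient extraction $\sum_k x^k\,[z^k w^k]\,T_a(z,x-1)\,T_b(w,x-1)$, and the closed form of $T_a$ makes the subsequent verification of the predicted $f$-polynomial tractable.

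The main obstacle lies in this last algebraic step: after the diagonal extraction, one must check that the resulting polynomial is palindromic and that its initial half-interval successive differences equal exactly $\binom{a}{k}\binom{b}{k}$. A more combinatorial alternative would be to interpret $\binom{a}{k}\binom{b}{k}$ as the number of lattice paths from $(0,0)$ to $(a,b)$ with exactly $k$ NE-corners and match this to a shelling of $\Delta_a\times\Delta_b$; but without such a shelling prepared in advance, the generating-function route above appears to be the cleanest self-contained path.
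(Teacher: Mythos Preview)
Your plan is sound and lines up with the paper at the structural level, but the execution of step (ii) diverges substantially.

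For step (i), you have rediscovered exactly what the paper does in Lemma~1: the identity $f(\mathrm{Pyr}(Q),x)=g(Q,x)+x\,f(Q,x)$ is the paper's equation~(\ref{fgf}), and the coefficient comparison using the Eulerian symmetry is the same argument that closes the proof of Lemma~1. The only difference is cosmetic: you state the pyramid lemma for an arbitrary polytope $Q$ and prove it once, whereas the paper restricts to $Q=\Delta_m\times\Delta_n$ and folds the induction into the specific family. Your formulation is cleaner and in fact is a known general fact.

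For step (ii), the two routes part ways. The paper's Section~4 attacks $\widetilde g_{m,n}$ by writing the defining recursion for $\widetilde f_{m,n}(x+1)$ as a sum over the entries of an auxiliary $(m{+}n{+}1)\times(n{+}1)$ matrix $\widetilde M$, padding it to a slightly larger matrix $\widehat M$ to make the column pattern uniform, extracting $[x^r]\widehat f_{m,n}(x+1)$ as a triple sum, and then matching it against the coefficients predicted by the claimed $\widetilde g_{m,n}$ via a three-layer Vandermonde-type identity (Lemma~6). Your generating-function packaging via $T_a(z,u)=\bigl((u+1+z)^{a{+}1}-u^{a{+}1}\bigr)/(1+z)$ and the diagonal extraction $\sum_k x^k\,[z^k w^k]\,T_a(z,x-1)T_b(w,x-1)$ is a genuinely different organization of the same recursion; it compresses the paper's matrix bookkeeping into a single bivariate identity and would, if carried through, replace all of Subsections~4.1--4.6. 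What you gain is conceptual economy; what you lose is that the ``main obstacle'' you flag---checking palindromicity and that the successive differences are $\binom{a}{k}\binom{b}{k}$---is still real work, and you have not shown it here. The paper's approach is longer but leaves nothing to the reader; yours is shorter but currently a sketch at the decisive point.
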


The toric $h$-vector can be derived directly from Definition~\ref{def}, Theorem~\ref{thm:toric_sym} and Theorem~\ref{main2}.

\begin{corollary}\label{cor:toric_f}
Let $1 \le \beta \le \alpha$, and $f_{\alpha,\beta}(x):=f(P_{\alpha,\beta},x)$ be the toric $f$-polynomial of the $P_{\alpha, \beta}$.
We have
$$f_{\alpha,\beta}(x)=\sum_{k=0}^{\beta-1}\mathbf{S}_k x^{k} + \mathbf{S}_{\beta-1}\sum_{k=\beta}^{\alpha-1}x^k + \sum_{k=\alpha}^{\alpha+\beta-1}\mathbf{S}_{\alpha+\beta-1-k}x^k, $$
where $$\mathbf{S}_{\ell}:=\sum_{k=0}^{\ell}{\alpha-1\choose k}{\beta-1\choose k}$$ for $0<\ell\leq\beta-1$.
\end{corollary}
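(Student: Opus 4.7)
The plan is to recover the toric $h$-vector $(h_0,\ldots,h_d)$ of the face lattice of $P_{\alpha,\beta}$ from $g_{\alpha,\beta}(x)$, and then read off $f_{\alpha,\beta}(x)=\sum_{k=0}^{d}h_k x^k$. Set $d:=\alpha+\beta-1$ (the dimension of $P_{\alpha,\beta}$ by Theorem~\ref{main1}) and $m:=\lfloor d/2\rfloor$. The hypothesis $\beta\le\alpha$ forces $\beta-1\le m$ (since $2(\beta-1)\le\alpha+\beta-2<d$), a bound that will be used repeatedly.

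First I would invert the recurrence in Definition~\ref{def}\,(2): $g_0=h_0$ and $g_k=h_k-h_{k-1}$ for $1\le k\le m$ give
$$h_k=\sum_{j=0}^{k}g_j\qquad (0\le k\le m).$$
Theorem~\ref{main2} supplies $g_j=\binom{\alpha-1}{j}\binom{\beta-1}{j}$ for $0\le j\le\beta-1$ and $g_j=0$ for $j\ge\beta$. Hence $h_k=\mathbf{S}_k$ for $0\le k\le\beta-1$, and the partial sum stabilizes at $h_k=\mathbf{S}_{\beta-1}$ on the plateau $\beta-1\le k\le m$.

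Next I would apply Theorem~\ref{thm:toric_sym} in the form $h_k=h_{d-k}$ to determine the upper half of the vector. For $\alpha\le k\le d$ one has $d-k\le\beta-1$, so $h_k=\mathbf{S}_{d-k}=\mathbf{S}_{\alpha+\beta-1-k}$. Reflecting the plateau $[\beta-1,m]$ across the center of $[0,d]$ extends the constant value $\mathbf{S}_{\beta-1}$ to the full range $\beta-1\le k\le\alpha$ (using $d-\alpha=\beta-1$). Reading off the coefficients then assembles precisely into the three sums in the stated formula.

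The one place that requires care is the handling of the two parity cases $d=2m$ and $d=2m+1$: in each case one must verify that the plateau meets its mirror image across the center without a gap or overlap, and that the shared endpoints $k=\beta-1$ and $k=\alpha$ of adjacent ranges receive the value $\mathbf{S}_{\beta-1}$ consistently from both sides. These amount to short direct checks on the ranges $[m+1,\alpha]$ and $\{d-j:\beta-1\le j\le m\}$, after which the corollary follows by identification of coefficients. The main obstacle is thus purely index bookkeeping rather than any deeper difficulty, since Theorems~\ref{thm:toric_sym} and~\ref{main2} together already contain all the substantive content.
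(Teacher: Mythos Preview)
Your proof is correct and follows exactly the approach the paper indicates: the corollary is stated as a direct consequence of Definition~\ref{def}, Theorem~\ref{thm:toric_sym}, and Theorem~\ref{main2}, and you have filled in precisely those details---inverting $g_k=h_k-h_{k-1}$ to get $h_k=\mathbf{S}_k$ on $[0,\beta-1]$, stabilizing to $\mathbf{S}_{\beta-1}$ on $[\beta-1,m]$, and reflecting via $h_k=h_{d-k}$. The paper carries out the identical reconstruction (for the closely related $\widetilde{f}_{m,n}$) in Subsection~4.5, so your argument matches both the letter and the spirit of the paper's treatment.
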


For example, for $P_{2, 2}$, by Theorem 1 we can compute $(a_{-1},a_0,a_1,a_2,a_3)=(1,4,4,1,0)$ and therefore
$(f_0,f_1,f_2,f_3)=(5,8,5,1)$, as seen in Figure 2.
By Theorem 2 the toric $g$-polynomial is
$$g(P_{2,2},x)=\sum_{k=0}^1{1\choose k}{1\choose k}x^k=1+x$$
and the toric $f$-polynomial is $f(P_{2,2},x)=1+2x+2x^2+x^3$ by
Corollary~\ref{cor:toric_f}. Both can be computed directly (and
tediously) by definition and we omit the details.

\medskip
The rest of the paper is organized as follows. In Section 2 we
describe the shape of $P_{\alpha,\beta}$ and compute its $f$-vector.
Section 3 collect some preliminary facts needed for computing the
toric $g$-polynomial. The computation is finished in Section $4$.

\section{Shape and $f$-vector} In this section we prove Theorem 2, characterizing the shape of $P_{\alpha, \beta}$ and compute the $f$-vector.
Without loss of generality assume that $\alpha \ge \beta \ge 1$ and
let $m=\alpha-1, n=\beta-1$ for simplicity.
%

\medskip
\noindent \textit{Proof of Theorem 2 }: (1) As $P_{\alpha, \beta}$
is induced from the the lattice paths that goes from $(0,0)$ to
$(\alpha,\beta)$ and remains in the hook shape, we can denote the
incidence vectors in the following way. Let $\mathbf{e}_{k}\in
\mathbb{R}^{\alpha+\beta}$ be the elementary unit vector with the
only nonzero entry $1$ in its $k$-th coordinate. Then the incidence
vectors are $\mathbf{v}_{0,1}=(0,\dots,0,1,\dots,1)\in
\mathbb{R}^{\alpha+\beta}$ with the first $\alpha$ coordinates $0$'s
and the rest $\beta$ coordinates $1$'s, and
$\mathbf{v}_{i,j}=\mathbf{v}_{0,1}+\mathbf{e}_{i}-\mathbf{e}_{\alpha+j}$
 with $1\leq i \leq
\alpha$ and $1 \le j \le \beta$. We are to consider the polytope
$$P_{\alpha, \beta}:=\mbox{conv}\{\mathbf{v}_{0,1}, \mathbf{v}_{i,j} : 1\leq i \leq
\alpha, 1\leq j \leq \beta \}.$$

First we prove that $P_{\alpha, \beta}$ is a pyramid. It is clear
that $P_{\alpha, \beta} \subseteq \mathbb{R}^{\alpha+\beta-1}$ since
it is in the hyperplane $H_{\alpha+\beta-1}:=\{ (x_1,\dots x_\alpha,
y_1, \dots
y_\beta):x_{1}+\cdots+x_{\alpha}+y_{1}+\cdots+y_{\beta}=\beta\}$.
Let $$B:=\mbox{conv}\{\mathbf{v}_{i,j} : 1\leq i \leq \alpha, 1\leq
j \leq \beta \}, $$ then similarly one has $B\subseteq
H_{\alpha+\beta-2}:=\{ (x_1,\dots x_\alpha, y_1, \dots
y_\beta):x_{1}+\cdots+x_{\alpha}=1,
y_{1}+\cdots+y_{\beta}=\beta-1\}$. Note that we can also write
$$P_{\alpha, \beta}= \left\{\sum_{i,j} a_{i,j} \mathbf{v}_{i,j} + a_{0,1}\mathbf{v}_{0,1}: 0\le
a_{i,j},a_{0,1} \le  1, \sum_{i,j} a_{i,j}+a_{0,1}=1\right\}$$ and
$$B=\left\{ \sum_{i,j} a_{i,j} \mathbf{v}_{i,j} : 0\leq a_{i,j}\leq  1,  \sum_{i,j} a_{i,j}=1\right\}.$$
It is clear that the point $\mathbf{v}_{0,1}$ is not on
$H_{\alpha+\beta-2}$ and points in $B$ are all on
$H_{\alpha+\beta-2}$. Therefore, $P_{\alpha, \beta}$ is a pyramid
with the base $B$.

Next, we show that $B=\Delta_m\times \Delta_n$, where $\Delta_m $ and $\Delta_n$ denote an $m$-simplex and an $n$-simplex respectively.
Let $\mathbf{v}_{i}=(0,\dots,0,1,0,\dots,0), 1\leq i \leq \alpha$ be the unit vector in $\mathbb{R}^{\alpha}$ with the only non zero element $1$ in the $i$-th coordinate and $\mathbf{u}_{j}=(1,\dots,1,0,1,\dots,1), 1\leq j \leq \beta$ be the vectors in $\mathbb{R}^{\beta}$ with all coordinates $1$'s but the $j$-th coordinate $0$.
It is clear that
$$\Delta_m=\mbox{conv}(\mathbf{v}_{i})=\left\{\sum_{i=1}^{\alpha} a_{i}\mathbf{v}_{i}: 0\leq a_{i} \leq 1,  \sum_{i=1}^{\alpha} a_{i}=1\right\} \subseteq \left\{ ( x_{1}, \cdots, x_{\alpha}) : x_{1}+\cdots+x_{\alpha}=1 \right\}$$
in $\mathbb{R}^{m}(=\mathbb{R}^{\alpha-1})$,
and
$$\Delta_n=\mbox{conv}(\mathbf{u}_{j})=\left\{\sum_{j=1}^\beta b_{j}\mathbf{u}_{j} : 0\leq b_{j} \leq 1, \sum_{j=1}^\beta b_{j}=1\right \} \subseteq \{ (y_1, \dots , y_\beta): y_{1}+\cdots+y_{\beta}=\beta-1 \}$$
in $\mathbb{R}^{n}(=\mathbb{R}^{\beta-1})$.

If $\mathbf{w} \in \Delta_m\times \Delta_n$, then
$\mathbf{w}=(\mathbf{v},\mathbf{u})$, where
$\mathbf{v}=\sum_{i=1}^{\alpha} a_{i}\mathbf{v}_{i}$ and
$\mathbf{u}=\sum_{j=1}^{\beta} b_{j}\mathbf{u}_{j}$ for some $0\leq
a_{i}, b_{j} \leq 1$ with $\sum_{i=1}^{\alpha}
a_{i}=\sum_{j=1}^{\beta} b_{j}=1 $. Therefore
$$\mathbf{w}=\left(\sum_{i=1}^\alpha a_{i}\mathbf{v}_{i},\sum_{j=1}^\beta b_{j}\mathbf{u}_{j}\right)=\sum_{1\le i \le \alpha,\, 1\le j \le \beta} a_{i}b_{j}\mathbf{v}_{i,j},$$
as $\mathbf{v}_{i,j}=(\mathbf{v}_{i},\mathbf{u}_{j})$. It is clear
that $0\leq a_{i}b_{j} \leq 1$, and $\sum_{1\le i \le \alpha,\, 1\le
j \le \beta} a_{i}b_{j}=1$. So, $\mathbf{w}\in B$.

On the other hand, if $\mathbf{w}\in B$, then $\mathbf{w}=\sum_{1\le
i \le \alpha,\,1\le j \le \beta} c_{i,j}\mathbf{v}_{i,j}$, where $
0\leq c_{i,j} \leq 1$, and $\sum_{1\le i \le \alpha,\, 1\le j \le
\beta} c_{i,j}=1$. Since
$(\mathbf{v}_{i},\mathbf{u}_{j})=\mathbf{v}_{i,j}$, we get
$$
\mathbf{w} = \sum_{1\le i \le \alpha,\, 1\le
j \le \beta} c_{i,j}(\mathbf{v}_{i},\mathbf{u}_{j})
=\left(\sum_{i}a_{i}\mathbf{v}_{i},\sum_{j}b_{j}\mathbf{u}_{j}\right),
$$
with $a_{i}=\sum_{j=1}^{\beta} c_{i,j}$,
$b_{j}=\sum_{i=1}^{\alpha}c_{i,j}$. It is clear that $0\le a_i,
b_j\le 1$ and $\sum_{i=1}^\alpha a_{i}=\sum_{j=1}^\beta b_{j}=1$.
Therefore, $\mathbf{w} \in \Delta_m\times \Delta_n$.

(2) Now the $f$-vector is easy to compute.
Since $B=\Delta_m\times\Delta_n$, it has $$r_{i}:=\sum_{k=1}^{i+1}{\alpha \choose k}{\beta \choose i+2-k}$$ faces of dimension $i$.
Since $P_{\alpha, \beta}$ is a pyramid over $B$, it then has $r_i+r_{i-1}$ faces of dimension $i$ and we are done. Note the number $r_{-1}:=1$ counts the unique vertex not on the base, i.e., the apex.

(3) From $(2)$ we know there are $\alpha\beta+1$ vertices in
$P_{\alpha, \beta}$. Hence these vertices are exactly
$\mathbf{v}_{0,1}$ and $\mathbf{v}_{i,j}$, $1\le i\le \alpha$, $1\le
j\le \beta$. Let $a,b$ be the endpoints of an edge in
$P_{\alpha,\beta}$. There are three cases: (i) $a=\mathbf{v}_{i,j}$,
$b=\mathbf{v}_{i,j'}$ for some $i$ and $j\neq j'$, (ii)
$a=\mathbf{v}_{i,j}$, $b=\mathbf{v}_{i',j}$ for some $i\neq i'$ and
$j$, and (iii) $a=\mathbf{v}_{0,1}$, $b=\mathbf{v}_{i,j}$ for some
$i,j$. In either case, the distance of $a,b$ is $\sqrt{2}$.

(4) The diameter of $B=\Delta_m\times \Delta_n$ is clearly $2$ and also the pyramid having $B$ as its basis.

\qed

%

%

\section{Toric $h$-vector}
We embark on deriving the toric $g$-vector $g_{\alpha, \beta}(x)$.
Recall that $m=\alpha-1$ and $n=\beta-1$. In the following we call
$\Delta_m \times \Delta_n$ the \emph{reduced $(m,n)$ polytope}.  We
denote its toric $f$- and toric $g$-polynomial by
$\widetilde{f}_{m,n}(x)$ and $\widetilde{g}_{m,n}(x)$ respectively.

Our strategy of proof is as follows.
In this section we prove that $g_{\alpha,\beta}(x)$ equals to $\widetilde{g}_{m,n}(x)$.
The second step (Section 4) is to compute $\widetilde{g}_{m,n}(x)$.

\subsection{From $g_{\alpha,\beta}$  to $\widetilde{g}_{m,n}$}
For the reduced $(m,n)$ polytope with $m\ge n\ge 1$, its corresponding region is made of horizontal $m$ boxes touched at a corner by $n$ vertical boxes, see Figure 3(a) for the region corresponding to the reduced $(4,3)$ polytope.
The $m+1$ paths going from $(0,0)$ to $(m,1)$ corresponds to the $m+1$ vertices of an $m$-simplex, and the $n+1$ paths going from $(m,1)$ to $(m+1,n+1)=(\alpha,\beta)$ corresponds to the $n+1$ vertices of an $n$-simplex.
Also, by symmetry we have $\widetilde{g}_{m,n}=\widetilde{g}_{n,m},\widetilde{f}_{m,n}=\widetilde{f}_{n,m}$ and $g_{\alpha,\beta}=g_{\beta,\alpha},f_{\alpha,\beta}=f_{\beta,\alpha}$.

\begin{figure}[h]
\includegraphics[width=3in]{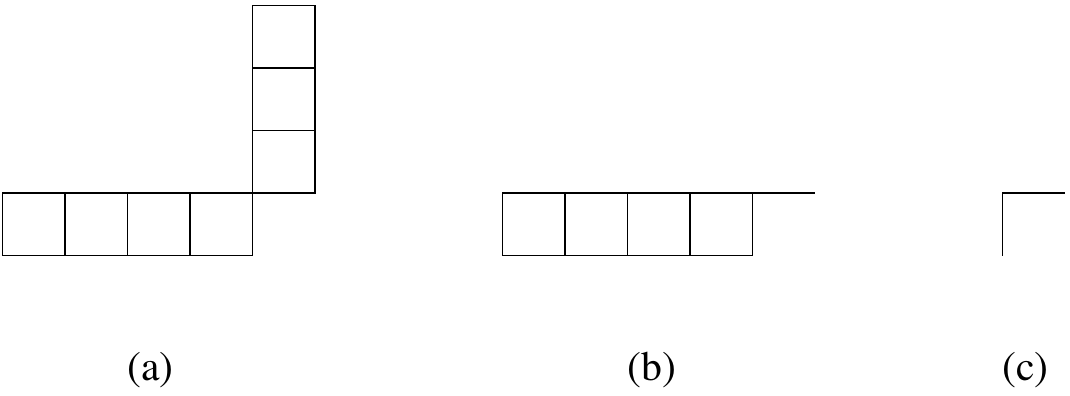}
\caption{Regions for reduced polytope}
\end{figure}

Some initial cases need to be clarified. For $\alpha>\beta =1$
($m>n=0$), The reduced $(m,0)$ polytope is induced from the bounded
region as illustrated in Figure 3(b) for the reduced $(4,0)$
polytope. The $m+1$ paths from $(0,0)$ to $(m,1)$ correspond to the
$m+1$ vertices of an $m$-simplex and the path from $(m,1)$ to
$(m+1,1)$ contributes nothing. Hence the reduced $(m,0)$ polytope is
an $m$-simplex and we have $\widetilde{g}_{m,0}=1$, see~\cite{S}.
Similarly, the reduce $(0,n)$ polytope is the $n$-simplex and
$\widetilde{g}_{0,n}=1$. Also, the `region' corresponding to the
reduced $(0,0)$ polytope is illustrated in Figure 3(c), which is the
$0$-simplex with $\widetilde{g}_{0,0}(x)=1$.

\begin{lemma}\label{lem1}
We have $g_{\alpha,\beta}(x)=\widetilde{g}_{m,n}(x)$.
\end{lemma}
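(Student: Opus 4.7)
The plan is to establish Lemma 1 as a special case of a general fact: for any convex polytope $P$ that is a pyramid over a base polytope $B$, the toric $g$-polynomials satisfy $g(P,x) = g(B,x)$. Since $P_{\alpha,\beta}$ is a pyramid over $B = \Delta_m \times \Delta_n$ by Theorem 2(1), the identity $g_{\alpha,\beta}(x) = \widetilde{g}_{m,n}(x)$ follows immediately.

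I would prove the general fact by induction on $d = \dim B$ via the key intermediate identity
\[
f(P,x) \;=\; g(B,x) + x\,f(B,x),
\]
with the base case $d = 0$ being a direct check from Definition 1. For the inductive step, I would first describe the face lattice of $P$: besides $\hat{0}$ and $\hat{1}=P$, its elements are the nonempty faces $F$ of $B$ (each of rank $\dim F + 1$), the apex $v$ (of rank $1$), and the joins $v*F$ for proper nonempty faces $F$ of $B$ (each of rank $\dim F + 2$). Crucially, the interval $[\hat{0}, v*F]$ is isomorphic to the face lattice of the pyramid $v*F$ over $F$, and for $F \subsetneq B$ the inductive hypothesis yields $g(v*F,x) = g(F,x)$.

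The main computational step is unwinding the recursion in Definition 1(3) for $f(P,x)$. Splitting the sum into five groups according to the type of $y$ producing the interval $[\hat{0},y]$ (namely $y=\hat{0}$, $y=v$, $y=F$ proper nonempty face of $B$, $y=B$, and $y=v*F$ for proper nonempty $F$), substituting $g(v*F,x) = g(F,x)$ by induction, and combining the two sums indexed by proper faces $F$ of $B$ using the factorization $(x-1)^{d-\dim F} + (x-1)^{d-\dim F-1} = x\,(x-1)^{d-\dim F-1}$, produces
\[
f(P,x) = (x-1)^{d+1} + (x-1)^d + g(B,x) + x\sum_{F} g(F,x)(x-1)^{d-\dim F-1},
\]
where the sum runs over proper nonempty faces $F$ of $B$. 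Recognizing this inner sum as $f(B,x) - (x-1)^d$ via the analogous recursion for $f(B,x)$ and simplifying using $(x-1)+1-x=0$ yields $f(P,x) = g(B,x) + x\,f(B,x)$.

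To deduce $g(P,x) = g(B,x)$, I would write $f(B,x) = \sum_{i=0}^{d} h_i x^i$, so $g(B,x) = \sum_{i=0}^{\lfloor d/2\rfloor}(h_i-h_{i-1})x^i$. Reading off the coefficients of $f(P,x) = g(B,x) + x\,f(B,x)$ gives $H_i = h_i$ for $0\le i\le \lfloor d/2\rfloor$ and $H_i = h_{i-1}$ for $\lfloor d/2\rfloor+1 \le i \le d+1$. A short case analysis on the parity of $d$ then verifies that $g(P,x) = \sum_{i=0}^{\lfloor(d+1)/2\rfloor}(H_i - H_{i-1})x^i$ agrees with $g(B,x)$, where the potential top term at $i = \lfloor(d+1)/2\rfloor$ vanishes when $d$ is odd since there $H_i = H_{i-1} = h_{\lfloor d/2\rfloor}$. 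The main obstacle is the careful bookkeeping in unwinding the recursion for $f(P,x)$ and spotting the hidden copy of $f(B,x) - (x-1)^d$ inside the two sums over proper faces of $B$; once the intermediate identity $f(P,x) = g(B,x) + x\,f(B,x)$ is in hand, the passage from $f$ to $g$ is a short algebraic exercise.
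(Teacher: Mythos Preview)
Your proposal is correct and follows essentially the same route as the paper: both proofs obtain the key identity $f(P,x)=g(B,x)+x\,f(B,x)$ for the pyramid $P$ over its base $B$ by splitting the recursion of Definition~\ref{def}(3) into faces contained in $B$ versus faces containing the apex, applying the inductive hypothesis to identify the $g$-polynomials of the apex-containing faces with those of their bases, and then reading off $g(P,x)=g(B,x)$ by a parity check on coefficients. The only difference is one of scope: the paper runs the induction inside the family $P_{a,b}$ (on $a+b$), using that every face of $\Delta_m\times\Delta_n$ is again a product of simplices, whereas you phrase the same argument for an arbitrary pyramid (inducting on $\dim B$), which is a clean and slightly more general packaging of the identical computation.
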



\begin{proof}
We proceed by induction on $\alpha +\beta$.
When $\alpha =\beta =1$, $P_{1,1}$ is the $1$-simplex and therefore $g_{1,1}=1=\widetilde{g}_{0,0}$.
Also $g_{m+1,1}=g_{1,m+1}=1$ since $P_{m+1,1}$ is an $(m+1)$-simplex \cite[Example 3.16.8]{S} and by symmetry.
From the discussion of the initial cases above we know $\widetilde{g}_{m,0}=\widetilde{g}_{0,m}=1$ for all $m>0$.
Hence the result holds for $\alpha=1$ or $\beta=1$


Suppose that  $g_{a,b}=\widetilde{g}_{a-1,b-1}$ is true for all $a,b\ge 1$ and $2 \leq a+b \leq \alpha+\beta-1$.
We compute $f_{\alpha,\beta}$ using Definition \ref{def}(3).
Since the $P_{\alpha, \beta}$ is in $\mathbb{R}^{\alpha+\beta-1}$, the rank of the face lattice is $\alpha+\beta$ and hence $f_{\alpha,\beta}$ is a polynomial of degree $d=\alpha+\beta-1 (=m+n+1)$.
We will heavily use the fact that $P_{\alpha, \beta}$ is a pyramid with a basis of $\Delta_m\times \Delta_n$.
Note the fact that a face of $\Delta_m\times \Delta_n$ is of the form $\Delta_i\times \Delta_j$ for smaller $0\le i\le m$ or $0\le j\le n$.
That is, of the from $\Delta_{a-1}\times \Delta_{b-1}$ for smaller $1\le a\le m+1=\alpha$ or $1\le b\le n+1=\beta$, and there are $$C_{a,b}:={\alpha\choose a}{\beta\choose b}={m+1\choose a}{n+1\choose b}$$ faces of them.

There are two kinds of faces in $P_{\alpha, \beta}$:
\begin{enumerate}
\item Faces not containing the apex of the pyramid.
They are of the form $\Delta_{a-1}\times \Delta_{b-1}$ and each contributes $\widetilde{g}_{a-1,b-1}(x-1)^{d-(a+b-1)}$ to $f_{\alpha, \beta}$.
\item Faces containing the apex of the pyramid.
They are pyramid with a basis $\Delta_{a-1}\times \Delta_{b-1}$ and such a face will contributes $g_{a,b}(x-1)^{d-(a+b-1)}$ to $f_{\alpha, \beta}$.

\end{enumerate}
Hence a direct computation gives
\begin{align*}
\begin{split}
f_{\alpha,\beta} =& \left(\widetilde{g}_{m,n}+\sum_{a,b\ge 1, a+b=d} C_{a,b}\ g_{a,b}\right)(x-1)^{0}\\
& + \left(\sum_{a,b\ge 1, a+b=d} C_{a,b}\ \widetilde{g}_{a-1,b-1}+\sum_{a,b\ge 1, a+b=d-1} C_{a,b}\ g_{a,b}\right)(x-1)^{1}\\
& + \dots \\
&+ \left(\sum_{a,b\ge 1, a+b=3} C_{a,b}\ \widetilde{g}_{a-1,b-1}+\sum_{a,b\ge 1, a+b=2} C_{a,b}\ g_{a,b}\right)(x-1)^{d-2}\\
&+ \left(\sum_{a,b\ge 1, a+b=2} C_{a,b}\ \widetilde{g}_{a-1,b-1}+1\right)(x-1)^{d-1}+(x-1)^{d}
\end{split}
\end{align*}
Collecting the similar terms we have
\begin{align*}
\begin{split}
f_{\alpha,\beta}= & \widetilde{g}_{m,n}+
\left((x-1)^{d-1}+\sum_{k=2}^{d}\left(\sum_{a,b\ge 1, a+b=k} C_{a,b}\ g_{a,b}\right)(x-1)^{d-k}\right)\\
& + (x-1)\left((x-1)^{d-1}+\sum_{k=2}^{d}\left(\sum_{a,b\ge 1, a+b=k} C_{a,b}\ \widetilde{g}_{a-1,b-1}\right)(x-1)^{d-k}\right).
\end{split}
\end{align*}
By the induction hypothesis that $g_{a,b}=\widetilde{g}_{a-1,b-1}$
we reach at
\begin{equation}~\label{eq0}
f_{\alpha,\beta}= \widetilde{g}_{m,n}+x\left((x-1)^{d-1}+\sum_{k=2}^{d}\left(\sum_{a,b\ge 1, a+b=k} C_{a,b}\ \widetilde{g}_{a-1,b-1}\right)(x-1)^{d-k}\right).
\end{equation}

On the other hand, similarly by applying Definition \ref{def}(3) directly to compute $\widetilde{f}_{m,n}$ we will have
\begin{equation}\label{eq1}
\widetilde{f}_{m,n}=(x-1)^{d-1}+\sum_{k=2}^{d}\left(\sum_{a,b\ge 1, a+b=k} C_{a,b}\ \widetilde{g}_{a-1,b-1}\right)(x-1)^{(d-1)-(k-1)}.
\end{equation}

By comparing (\ref{eq0}) and (\ref{eq1}) we have
\begin{equation}\label{fgf}
f_{\alpha,\beta}=\widetilde{g}_{m,n}+x\widetilde{f}_{m,n}.
\end{equation}
Write $\widetilde{f}_{m,n}=h_{0}+h_{1}x+\cdots+h_{m+n}x^{m+n}$ and
$\widetilde{g}_{m,n}=h_{0}+(h_{1}-h_{0})x+\cdots+(h_{\ell}-h_{\ell-1})x^{\ell}$
with $\ell=\lfloor\frac{m+n}{2}\rfloor$. Therefore, by (\ref{fgf})
we obtain
$$f_{\alpha,\beta}=h_{0}+h_{1}x+\cdots+h_{\ell}x^{\ell}+h_{\ell}x^{\ell+1}+h_{\ell+1}x^{\ell+2}+\cdots+h_{m+n}x^{m+n+1}$$
and then by definition
\begin{equation*}
g_{\alpha,\beta}=
\left\lbrace
\begin{array}{ll}
h_{0}+(h_{1}-h_{0})x+\cdots+(h_{\ell}-h_{\ell-1})x^{\ell}+(h_{\ell}-h_{\ell})x^{\ell+1},   &\text{ if $m+n$ is odd;}\\
h_{0}+(h_{1}-h_{0})x+\cdots+(h_{\ell}-h_{\ell-1})x^{\ell},   &\text{ if $m+n$ is even.}
\end{array}
\right.
\end{equation*}
In either case, we have $g_{\alpha,\beta}=\widetilde{g}_{m,n}$ and
the lemma is proved.
\end{proof}

\section{The Proof of Theorem 5}
By Lemma 1 the remaining task for proving Theorem \ref{main2} is to
prove
$$\widetilde{g}_{m,n}(x)= \sum_{k=0}^{n}{m\choose k}{n\choose
k}x^{k}.$$

\textit{Proof of Theorem 5.} The proof is somewhat complicated and
here we sketch our strategy. We are to use induction on $m+n$. From
Definition~\ref{def}(2) it suffices to find
$\widetilde{f}_{m,n}(x)$. To do so, we will proceed as the
following.
\begin{enumerate}
\item (Subsection 4.1) Express $\widetilde{f}_{m,n}(x+1)$ (note the variable is
$x+1$ rather than $x$) in terms of $\widetilde{g}_{r,s}(x+1)$
with $r+s\le m+n-1$. It can be seen that
$\widetilde{f}_{m,n}(x+1)$ is a sum of entries, each multiplied
by a power of $x$, of a matrix $\widetilde{M}$ of size
$(m+n+1)\times (n+1)$.
\item  (Subsection 4.2) It turns out that it is more convenient to consider a modified matrix $\widehat{M}$ of size $(m+n+2)\times (n+1)$.
The reason to do so is that the corresponding sum
$\widehat{f}_{m,n}$, which is a Laurent polynomial, of entries of
$\widehat{M}$ is easier to compute.
\item (Subsection 4.3) From $\widehat{f}_{m,n}$ we compute $[x^r]\widehat{f}_{m,n}(x+1)$, the coefficient of
$x^r$ in $\widehat{f}_{m,n}(x+1)$.
\item (Subsection 4.4) From  $[x^r]\widehat{f}_{m,n}(x+1)$ we compute $[x^r]\widetilde{f}_{m,n}(x+1)$.
\item (Subsection 4.5) On the other hand, if our theorem is correct, then from Definition~\ref{def}(2) we can write $\widetilde{f}_{m,n}(x+1)$ in terms of
$\widetilde{g}_{r,s}(x+1)$, in which on each term Theorem 5 is
applied. Hence we obtain another expression of
$[x^r]\widetilde{f}_{m,n}(x+1)$
\item (Subsection 4.6) We check both expressions meet by way of a binomial identity. Hence the proof is completed.
\end{enumerate}

\subsection{The matrix $\widetilde{M}$}

We start with expressing $\widetilde{f}_{m,n}(x+1)$ in terms of $\widetilde{g}_{a-1,b-1}(x+1)$ for all $a,b\ge 1$ and $a+b \le n+m+1$.
Note that $0 \le a-1 \le m, 0 \le b-1 \le n$, and $n\leq m$.
The initial cases $\widetilde{g}_{0,0}(x+1)=\widetilde{g}_{1,0}(x+1)=\widetilde{g}_{0,1}(x+1)=1$ are proved in Lemma 1.
In what follows, we consider $1 \leq n \leq m$.
Writing equation (\ref{eq1}) and in terms of $m,n$ in the variable
$(x+1)$, we obtain
$$\widetilde{f}_{m,n}(x+1)=x^{m+n}+\sum_{k=2}^{m+n+1}\left(\sum_{a,b \ge 1, a+b=k}C_{a,b}\ \widetilde{g}_{a-1,b-1}(x+1)\right)x^{m+n+1-k}.$$

We define an $(m+n+1)\times(n+1)$ matrix $\widetilde{M}$ with the row
index $s$ from $0$ to $m+n$ (the top row index is $s=0$) and the
column index $t$ from $1$ to $n+1$ as follows:
\begin{enumerate}
\item The first column  is
$$(0,\dots,0,C_{m+1,1}\widetilde{g}_{m,0}(x+1),
C_{m,1}\widetilde{g}_{m-1,0}(x+1),\dots,C_{1,1}\widetilde{g}_{0,0}(x+1),1)^{\mathsf{T}},$$
where the first $n-1$ entries are $0$'s.
\item The last ($(n+1)$-th) column is
$$(C_{m,n+1}\widetilde{g}_{m-1,n}(x+1),C_{m-1,n+1}\widetilde{g}_{m-2,n}(x+1),\dots,C_{1,n+1}\widetilde{g}_{0,n}(x+1),0,\dots,0)^{\mathsf{T}},$$
where the last $(n+1)$ entries are $0$'s.
\item The $t$-th column, $2 \leq t \leq n$, is
$$(0,\dots,0,C_{m+1,t}\widetilde{g}_{m,t-1}(x+1), \dots,C_{1,t}\widetilde{g}_{0,t-1}(x+1),0,\dots,0)^{\mathsf{T}},$$ where the first
$n-t$ and the last $t$ entries are zeros.
\end{enumerate}

One can check that
\begin{equation}
\widetilde{f}_{m,n}(x+1)=\sum_{s=0}^{m+n}\sum_{t=1}^{n+1}\widetilde{M}_{s,t} x^{s},
\end{equation}
 where $\widetilde{M}_{s,t}$ is the
$(s,t)$ entry of $\widetilde{M}$.

For example, for $(m,n)=(4,3)$, the matrix $\widetilde{M}$ is
$$\widetilde{M}=\left[
\begin{array}{cccc}
  0 & 0 & C_{5,3}\widetilde{g}_{4,2}(x+1) & C_{4,4}\widetilde{g}_{3,3}(x+1)\\
  0 & C_{5,2}\widetilde{g}_{4,1}(x+1) & C_{4,3}\widetilde{g}_{3,2}(x+1) & C_{3,4}\widetilde{g}_{2,3}(x+1)\\
  C_{5,1}\widetilde{g}_{4,0}(x+1) & C_{4,2}\widetilde{g}_{3,1}(x+1) & C_{3,3}\widetilde{g}_{2,2}(x+1) & C_{2,4}\widetilde{g}_{1,3}(x+1)\\
  C_{4,1}\widetilde{g}_{3,0}(x+1) & C_{3,2}\widetilde{g}_{2,1}(x+1) & C_{2,3}\widetilde{g}_{1,2}(x+1) & C_{1,4}\widetilde{g}_{0,3}(x+1)\\
  C_{3,1}\widetilde{g}_{2,0}(x+1) & C_{2,2}\widetilde{g}_{1,1}(x+1) & C_{1,3}\widetilde{g}_{0,2}(x+1) & 0\\
  C_{2,1}\widetilde{g}_{1,0}(x+1) & C_{1,2}\widetilde{g}_{0,1}(x+1) & 0 & 0 \\
  C_{1,1}\widetilde{g}_{0,0}(x+1) & 0 & 0 & 0 \\
  1 & 0 & 0 & 0
\end{array}
\right]
$$

\subsection{The matrix $\widehat{M}$}
There are three type of the columns of $\widetilde{M}$, namely the
first, the last, and the middle columns. There is an additional $1$
in the first column, while there are only $m$ nonzero terms in the
last column. The middle columns all have $m+1$ nonzero terms.

To our end we would like to make them consistent, hence we modify it
into a $(m+n+2)\times(n+1)$ matrix $\widehat{M}$ as follows.
\begin{enumerate}
\item Add a `$-1$'-th row above the $0$-th row. Now the row index of $\widehat{M}$ goes from
$-1$ to $m+n$. In this $-1$-th row, the first $n$ entries are
all $0$'s and the $(n+1)$-th entry is
$C_{m+1,n+1}\widetilde{g}_{m,n}=\widetilde{g}_{m,n}$.
\item The entries
$\widetilde{M}_{m+n-(t-1),t}, 1 \leq t \leq n+1$ are $1,0,\dots
, 0$. Now define
$$\widehat{M}_{m+n-(t-1),t}:=C_{0,t}\widehat{g}_{-1,t-1},$$ where
$$\widehat{g}_{-1,t-1}:=\sum_{k=0}^{t-1}{-1\choose k}{t-1\choose k}(x+1)^{k}=\sum_{k=0}^{t-1}(-1)^{k}{t-1\choose k}(x+1)^{k}=(-x)^{t-1}$$
by applying Theorem \ref{main2} blindfoldedly. Now the list
 $1,0,\dots,0$ becomes
$${n+1\choose 1}(-x)^{0}, {n+1\choose 2}(-x)^{1},\dots, {n+1\choose n+1}(-x)^{n}.$$
\item For other indices $(s,t)$, let
$\widehat{M}_{s,t}=\widetilde{M}_{s,t}$ (i.e.
$\widehat{g}=\widetilde{g}$ for these indices).
\end{enumerate}

For example, for $(m,n)=(4,3)$, the matrix $\widehat{M}$ is
$$\widehat{M}=\left[
\begin{array}{cccc}
  \mathbf{0} & \mathbf{0} & \mathbf{0} & \mathbf{C_{5,4}\widetilde{g}_{4,3}(x+1)}\\
  0 & 0 & C_{5,3}\widetilde{g}_{4,2}(x+1) & C_{4,4}\widetilde{g}_{3,3}(x+1)\\
  0 & C_{5,2}\widetilde{g}_{4,1}(x+1) & C_{4,3}\widetilde{g}_{3,2}(x+1) & C_{3,4}\widetilde{g}_{2,3}(x+1)\\
  C_{5,1}\widetilde{g}_{4,0}(x+1) & C_{4,2}\widetilde{g}_{3,1}(x+1) & C_{3,3}\widetilde{g}_{2,2}(x+1) & C_{2,4}\widetilde{g}_{1,3}(x+1)\\
  C_{4,1}\widetilde{g}_{3,0}(x+1) & C_{3,2}\widetilde{g}_{2,1}(x+1) & C_{2,3}\widetilde{g}_{1,2}(x+1) & C_{1,4}\widetilde{g}_{0,3}(x+1)\\
  C_{3,1}\widetilde{g}_{2,0}(x+1) & C_{2,2}\widetilde{g}_{1,1}(x+1) & C_{1,3}\widetilde{g}_{0,2}(x+1) & \mathbf{{4\choose 4}(-x)^{3}}\\
  C_{2,1}\widetilde{g}_{1,0}(x+1) & C_{1,2}\widetilde{g}_{0,1}(x+1) & \mathbf{{4\choose 3}(-x)^{2}} & 0 \\
  C_{1,1}\widetilde{g}_{0,0}(x+1) & \mathbf{{4\choose 2}(-x)^{1}} & 0 & 0 \\
  \mathbf{{4\choose 1}(-x)^{0}} & 0 & 0 & 0
\end{array}
\right].
$$



We then can write entries of $\widehat{M}$ consistently.

\begin{lemma} For all $s,t$, we have
\begin{equation}\label{Mst}
\widehat{M}_{s,t}=C_{m+n+1-s-t,t}\cdot \widehat{g}_{m+n-s-t,t-1}(x+1),
\end{equation}
where $$\widehat{g}_{m+n-s-t,t-1}(x+1)=\sum_{k=1}^n {m+n-s-t-1\choose
k}{t-2\choose k} x^k$$ is from the formula of Theorem \ref{main2}.
\end{lemma}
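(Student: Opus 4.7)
The plan is to verify the displayed identity for $\widehat{M}_{s,t}$ by a case analysis on $(s,t)$, matching the three kinds of entries of $\widehat{M}$ against a single uniform formula, and then reading off the polynomial form of $\widehat{g}$ from Theorem \ref{main2} under the induction hypothesis.

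First I would handle the \emph{generic} entries, those with $\widehat{M}_{s,t}=\widetilde{M}_{s,t}$ a nonzero entry of $\widetilde{M}$. From the column-by-column definition of $\widetilde{M}$, the topmost nonzero entry of column $t$ is $C_{m+1,t}\widetilde{g}_{m,t-1}(x+1)$, and the value of $a$ in $C_{a,t}\widetilde{g}_{a-1,t-1}$ drops by one with each successive row. A direct index count shows this $a$ equals $m+n+1-s-t$; together with $\widehat{g}_{r,t-1}=\widetilde{g}_{r,t-1}$ for $r\ge 0$ (available by the induction hypothesis on $m+n$), this establishes (\ref{Mst}) on the entire generic range. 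Next I would treat the two \emph{modified} families. For the inserted top row at $(s,t)=(-1,n+1)$, the formula evaluates to $C_{m+1,n+1}\widehat{g}_{m,n}(x+1)=1\cdot\widetilde{g}_{m,n}(x+1)$, matching its definition. For the rewritten diagonal entries $(s,t)=(m+n-(t-1),\,t)$ with $1\le t\le n+1$, plugging in yields $C_{0,t}\widehat{g}_{-1,t-1}(x+1)$, so the compatibility check reduces to verifying that the blindfold convention $\widehat{g}_{-1,t-1}(x+1)=(-x)^{t-1}$ is consistent with Theorem \ref{main2} at $r=-1$. This follows from $\binom{-1}{k}=(-1)^k$ together with the binomial theorem, since
$$\sum_{k=0}^{t-1}\binom{-1}{k}\binom{t-1}{k}(x+1)^k=\sum_{k=0}^{t-1}(-1)^k\binom{t-1}{k}(x+1)^k=\bigl(1-(x+1)\bigr)^{t-1}=(-x)^{t-1}.$$

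Finally, for positions of $\widehat{M}$ that remain zero after the modification (the top-left triangle of $\widetilde{M}$ together with the unaltered zeros at the bottom of the last column), I would check that the right-hand side of (\ref{Mst}) vanishes automatically: in those positions either $s<n-t$ (forcing $m+n+1-s-t>m+1$ and hence $\binom{m+1}{m+n+1-s-t}=0$) or, for the last column, $s>m$ (forcing $m+n+1-s-t<0$), so $C_{m+n+1-s-t,t}=0$ in each case. The explicit polynomial form of $\widehat{g}_{m+n-s-t,t-1}(x+1)$ asserted in the lemma is then simply Theorem \ref{main2} applied to $\widetilde{g}_{m+n-s-t,t-1}$ under the induction hypothesis when $m+n-s-t\ge 0$, and the convention just verified when $m+n-s-t=-1$.

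The proof is almost entirely clerical: careful indexing in the generic case plus a case check for the boundary positions. The only step involving a genuine computation — and therefore the main (mild) obstacle — is reconciling the $r=-1$ convention with the formula of Theorem \ref{main2}, which has just been reduced to the single binomial identity above.
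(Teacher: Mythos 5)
Your proof is correct and follows essentially the same route as the paper's: a case split into the upper-left zeros (where $\binom{m+1}{m+n+1-s-t}=0$), the lower-right zeros (where $m+n+1-s-t<0$), the modified boundary entries (checked against the convention $\widehat{g}_{-1,t-1}(x+1)=(-x)^{t-1}$), and the generic entries (handled by the induction hypothesis $\widehat{g}=\widetilde{g}$). The only quibble is that the unaltered lower-right zeros occur at the bottom of every column $t\ge 2$, not just the last one, but your stated reason ($C_{m+n+1-s-t,t}=0$ since the index is negative) covers them all.
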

\begin{proof}
First we look at the upper left $0$'s in $\widehat{M}_{s,t}$. These
indices are $1 \leq t \leq n-1$, $0 \leq s \leq (n-1)-t$, and $s=-1,
1 \leq t \leq n$. The equality (\ref{Mst}) holds since in the right
hand side $C_{m+n+1-s-t,t}={m+1 \choose m+n+1-s-t}{n+1\choose t}=0$
as $s+t<n$.

Next we look at the lower right $0$'s in $\widehat{M}_{s,t}$. These
indices are  $2 \leq t \leq n+1$, $m+n+2-t \leq s \leq m+n$. Again
the equality (\ref{Mst}) holds since in the right hand side
$C_{m+n+1-s-t,t}=0$ because $m+n+1-s-t<0$.

For the lower boundary, where $1 \leq t \leq n+1$ and $s=m+n-(t-1)$,
the equation~(\ref{Mst}) gives $C_{0,t}\widehat{g}_{-1,t-1}$, which
meets the definition of $\widehat{M}$.

For the rest entries, $\widehat{g}=\widetilde{g}$ from induction
hypothesis. Hecne the lemma is proved.
\end{proof}
%
%

Note that for those entries $\widehat{g}_{m+n-s-t,t-1}(x+1)$ with
$m+n-s-t<t-1$ (these are the cases $2 \leq t \leq n+1$ and $
m+n+2-2t \leq s \leq m+n-t$), the polynomial obtained is of degree
$m+n-s-t$ but we still view it as polynomials of degree $t-1$ with
zero coefficients for degrees greater than $m+n-s-t$.

\subsection{Coefficients of $\widehat{f}_{m,n}(x+1)$}
We define
$$\widehat{f}_{m,n}(x+1):=\sum_{s=-1}^{m+n}\sum_{t=1}^{n+1} \widehat{M}_{s,t} x^{s}.$$
Note that this is a Laurent polynomial containing terms of degrees
from $-1$ to $m+n$. Denote $[x^k]p(x)$ as the coefficient of $x^k$
in the Laurent polynomial $p(x)$. Our next task is the following.\\

\begin{lemma}~\label{fhat}
 We have
\begin{equation}\label{eq2}
[x^r]\widehat{f}_{m,n}(x+1)=\sum_{k=0}^{n}  {m+1\choose m+n-r-k}
\sum_{i=0}^{n-k} {n+1\choose i+k+1}\sum_{j=0}^{k}{i+j\choose
i}{m+n-r-k-1\choose i+j}{i+k\choose i+j}.
\end{equation}
\end{lemma}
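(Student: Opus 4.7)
The plan is to compute $[x^r]\widehat{f}_{m,n}(x+1)$ by direct expansion and bookkeeping, then to reindex so that the resulting triple sum takes the form stated in the lemma. No new combinatorial identity is needed; the content of the lemma is that a specific relabeling matches.

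First I would substitute the uniform expression of Lemma 2 into the definition of $\widehat{f}_{m,n}(x+1)$, getting
$$\widehat{f}_{m,n}(x+1)=\sum_{s=-1}^{m+n}\sum_{t=1}^{n+1}C_{m+n+1-s-t,\,t}\,\widehat{g}_{m+n-s-t,\,t-1}(x+1)\,x^{s}.$$
Invoking the induction hypothesis (Theorem 5 for all smaller $m+n$), each interior $\widehat{g}_{a,b}$ expands as $\sum_{\ell}\binom{a}{\ell}\binom{b}{\ell}(x+1)^{\ell}$, and in the modified row $s=m+n-(t-1)$ the value $(-x)^{t-1}$ is the same formal expression applied with $a=-1$. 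Expanding $(x+1)^{\ell}=\sum_{j}\binom{\ell}{j}x^{j}$ by the binomial theorem then turns $\widehat{f}_{m,n}(x+1)$ into a quadruple sum in $s,t,\ell,j$.

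Next I would extract the coefficient of $x^r$ by enforcing $s+j=r$, which removes the variable $j$. The remaining triple sum over $s,t,\ell$ is then reindexed via the substitutions $s=r-i$, $t=i+k+1$, and $\ell=i+j$ (with a new inner index $j$). Under this relabeling the factor $C_{m+n+1-s-t,t}$ becomes $\binom{m+1}{m+n-r-k}\binom{n+1}{i+k+1}$, the three inner binomials become $\binom{i+j}{i}\binom{m+n-r-k-1}{i+j}\binom{i+k}{i+j}$, and the bounds $\max(0,r-s)\le \ell\le t-1$ collapse to $0\le j\le k$. After this substitution the resulting expression is literally the right-hand side of the claimed equation.

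The last step is to justify the summation ranges $0\le k\le n$ and $0\le i\le n-k$ in the claim. The original $s$ ranges over $\{-1,\ldots,m+n\}$ and $t$ over $\{1,\ldots,n+1\}$, which translates to $-1\le i\le r+1$ and $0\le i+k\le n$; outside the smaller window $0\le k\le n,\ 0\le i\le n-k$ each term vanishes because one of the factors $\binom{n+1}{i+k+1}$ or the binomial bounding $\ell$ forces it to zero. The main obstacle I anticipate is precisely this bookkeeping: keeping track of the degenerate upper and lower rows of $\widehat{M}$ (the inserted $-1$ row, and the $(-x)^{t-1}$ diagonal) and confirming that the formal convention $\widehat{g}_{-1,t-1}$ merges uniformly with the generic entries so that a single reindexing handles all cases at once.
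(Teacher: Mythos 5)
Your proposal is correct and follows essentially the same route as the paper: the paper likewise uses the uniform entry formula $\widehat{M}_{s,t}=C_{m+n+1-s-t,t}\,\widehat{g}_{m+n-s-t,t-1}(x+1)$, isolates the nonzero contributions to $[x^r]$ (the "isosceles right triangle" $r-n\le s\le r$, $r-s+1\le t\le n+1$, which is exactly your window $0\le k\le n$, $0\le i\le n-k$), and applies the same reindexing $s=r-i$, $t=i+k+1$, $\ell=i+j$ to land on the stated triple sum. The only blemish is the throwaway claim that $s\in\{-1,\dots,m+n\}$ "translates to $-1\le i\le r+1$" (it gives $r-m-n\le i\le r+1$), but this does not affect your argument since the vanishing of terms outside the window is justified independently.
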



We need another slight modification of $\widehat{M}$, that is, we can
extend the row index of $\widehat{M}$ into $s\le m+n$. It is because
that $C_{m+n+1-s-t,t}=0$ when $s \le -2, 1 \le t \le n+1$. Hence
from now on
$$\widehat{M}_{s,t}=C_{m+n+1-s-t,t}\widehat{g}_{m+n-s-t,t-1}(x+1), $$ with $s \leq
m+n$ and $1 \leq t \leq n+1$. The non-zero rows begins from $s=-1$.

\begin{lemma}
The nonzero $[x^r]x^s\widehat{M}_{s,t}$ comes from the terms with
indices $r-n\leq s \leq r$ and $r-s+1 \leq t \leq n+1$.
\end{lemma}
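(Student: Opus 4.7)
This is essentially a bookkeeping statement about degree ranges, so the plan is to read off the constraints on $(s,t)$ directly from the definition of $\widehat{M}_{s,t}$. I would begin by recording, as in equation~(\ref{Mst}), that
\[
\widehat{M}_{s,t}=C_{m+n+1-s-t,t}\cdot\widehat{g}_{m+n-s-t,t-1}(x+1),
\]
and observing that by the formula cited just after Lemma~2, the factor $\widehat{g}_{m+n-s-t,t-1}(x+1)$ is a polynomial in the variable $x$ of degree at most $t-1$ (it is obtained from a polynomial of degree at most $t-1$ in $x+1$ by the substitution $x+1\mapsto (x+1)$, which does not change the total degree in $x$).

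Next, I would rewrite $[x^r]x^s\widehat{M}_{s,t}=[x^{r-s}]\widehat{M}_{s,t}$, so a nonzero contribution forces the coefficient of $x^{r-s}$ in a polynomial of degree at most $t-1$ to be nonzero. This immediately yields the two inequalities
\[
0\le r-s\le t-1,
\]
the first giving $s\le r$ and the second giving $t\ge r-s+1$.

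Finally, the column index of $\widehat{M}$ satisfies $1\le t\le n+1$ by construction. Combining $t\ge r-s+1$ with $t\le n+1$ yields $r-s+1\le n+1$, i.e., $s\ge r-n$. Thus the admissible indices are exactly $r-n\le s\le r$ and $r-s+1\le t\le n+1$, as claimed. I do not anticipate any real obstacle here: the only subtlety to watch is that the blindfold formula for $\widehat{g}$ still represents a polynomial of the same degree in $x$ as in $x+1$, so that the single degree bound $t-1$ controls everything uniformly across all entries (including the lower-right ``boundary'' entries noted in the remark after Lemma~2, which simply have even smaller degree and thus trivially satisfy the bound).
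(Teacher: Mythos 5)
Your proposal is correct and follows essentially the same route as the paper: both arguments reduce $[x^r]x^s\widehat{M}_{s,t}$ to $[x^{r-s}]\widehat{M}_{s,t}$ and read off the index constraints from the fact that each entry is a polynomial of degree between $0$ and $t-1\le n$. Your derivation of $s\ge r-n$ by combining $t\ge r-s+1$ with $t\le n+1$ is just a slight repackaging of the paper's observation that $\deg(\widehat{M}_{s,t})\le n$.
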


%
%
%
%

\begin{proof}

Fix $-1 \leq r \leq m+n$, it is clear that for $s>r$,
$[x^r]x^{s}\widehat{M}_{s,t}=0$ since the $\deg(\widehat{M}_{s,t})\ge 0$.
Similarly, $[x^r]x^{s}\widehat{M}_{s,t}=0$ for $s<r-n$ since
$\deg(\widehat{M}_{s,t})\le n$. Fix $r-n\leq s \leq r$, we only need to
look at entries $\widehat{M}_{s,t}$ for $t>r-s$. Also, because each
$\widehat{M}_{s,t}$ is viewed as a polynomial of degree $t$ for all $s$,
we obtain terms of degree $r$ from $x^{s}\widehat{M}_{s,t}$ for all
$r-n\leq s \leq r$ and $r-s+1 \leq t \leq n+1$.
%
\end{proof}
Hence, in $\widehat{M}$, such entries forms an isosceles right triangle
with side length $n+1$:
$$
\begin{array}{ccccccc}
  ~ & ~ &  ~ & ~ & ~ & ~ & \widehat{M}_{r-n,n+1} \\
  ~ & ~ &  ~ & ~ & ~ & * & * \\
  ~ & ~ &  ~ & ~ & * & * & * \\
  ~ & ~ &  ~ & \iddots & ~ & \vdots & \vdots \\
  ~ & ~ &  * & \dots & * & * & * \\
  ~ & *  & * & \dots & * & * & * \\
  \widehat{M}_{r,1} & *  & * & \dots & * & * & \widehat{M}_{r,n+1} \\
\end{array}
$$

\begin{proof} \emph{of Lemma 3:}
In the isosceles right triangle above, let the index $k$, running
from $0$ to $n$, indicates the hypotenuses top to down. Fix $k$, we
let the index $i$, running from $0$ to $n-k$ indicates the entry
from bottom to up on the $k$-th hypotenuse. Now we are to compute
\begin{eqnarray*}
[x^r]\widehat{f}_{m,n}(x+1) &=& \sum_{k=0}^n \sum_{i=0}^{n-k} [x^i] \widehat{M}_{r-i,i+k+1}.\\
&=& \sum_{k=0}^n \sum_{i=0}^{n-k} {m+1\choose m+n-r-k}{n+1\choose i+k+1}[x^i]g_{m+n-r-k-1, i+k}(x+1).
\end{eqnarray*}
Note that
\begin{eqnarray*}
[x^i]g_{m+n-r-k-1, i+k}(x+1) &=& [x^i]\sum_{\ell=0}^{i+k} {m+n-r-k-1\choose \ell} {i+k\choose l}(x+1)^{\ell}\\
&=& \sum_{j=0}^{k} {m+n-r-k-1\choose i+j} {i+k\choose i+j}{i+j\choose i}
\end{eqnarray*}
and the lemma is proved.
\end{proof}

\subsection{Coefficients of $\widetilde{f}_{m,n}(x+1)$}
We are ready to obtain $[x^r]\widetilde{f}_{m,n}(x+1)$. Recall that
$$\widetilde{f}_{m,n}(x+1)=\sum_{s=0}^{m+n}\sum_{t=1}^{n+1}x^{s}\widetilde{M}_{s,t},\qquad \widehat{f}_{m,n}(x+1)=\sum_{s=-1}^{m+n}\sum_{t=1}^{n+1}x^{s}\widehat{M}_{s,t}.$$
and $\widehat{M}$ is modified from $\widetilde{M}$, it is easy to have
$$\widetilde{f}_{m,n}(x+1)-x^{m+n}=\widehat{f}_{m,n}(x+1)-\sum_{k=0}^{n}x^{m+n-k}{n+1\choose k+1}(-x)^{k}-x^{-1}\widetilde{g}_{m,n}(x+1). $$

Since $\sum_{k=0}^{n}x^{m+n-k}{k+1 \choose n+1}(-x)^{k}=x^{m+n}$, by
the formula of $\widetilde{g}_{m,n}$ we obtain
$$\widetilde{f}_{m,n}(x+1)=\widehat{f}_{m,n}(x+1)-x^{-1}\sum_{k=0}^{n}{m\choose k}{n\choose k}(x+1)^{k}.$$
Hence
%
we obtain that $[x^r]\widetilde{f}_{m,n}(x+1)$ for $0\leq r \leq
m+n$ is given by
\begin{equation}\label{eq3}
\sum_{k=0}^{n}  {m+1\choose m+n-r-k}\sum_{i=0}^{n-k} {n+1\choose i+k+1} \sum_{j=0}^{k} {i+j\choose i} {m+n-r-k-1\choose i+j} {i+k\choose i+j} -{m\choose k}{n\choose k}{k\choose r+1}.
\end{equation}

\subsection{Another way}
In what follows, we will write $\widetilde{f}_{m,n}(x+1)$ in terms of $\widetilde{g}_{m,n}(x+1)$ given by the formula in Theorem \ref{main2} and expand it to read all the coefficients.
Recall that $\mathbf{S}_{\ell}:=\sum_{k=0}^{\ell}{m\choose k}{n\choose k}$ for $0 \leq \ell \leq \beta-1$.
Note that
$$
\widetilde{g}_{m,n}(x) = g_{\alpha, \beta}(x) = \sum_{k=0}^{\beta-1}{\alpha-1 \choose k}{\beta-1\choose k}x^k = \sum_{k=0}^{n}{m \choose k}{n\choose k}x^k.
$$
By using the facts that $\deg (\widetilde{f}_{m,n}(x))=m+n$,
coefficients of $\widetilde{f}$ are symmetric, and coefficients of
$[x^i]\widetilde{g}_{m,n}(x)= [x^{i+1}]\widetilde{f}_{m,n}(x)
-[x^i]\widetilde{f}_{m,n}(x) $, we have
\begin{align*}
\begin{split}
\widetilde{f}_{m,n}(x)=& (\mathbf{S}_{0}x^{0}+\mathbf{S}_{1}x^1+\cdots+\mathbf{S}_{n-1}x^{n-1})+(\mathbf{S}_{n}x^{n}+\mathbf{S}_{n}x^{n+1}\cdots+\mathbf{S}_{n}x^{m}) \\
&+(\mathbf{S}_{n-1}x^{m+1}+\cdots+\mathbf{S}_{0}x^{m+n}),
\end{split}
\end{align*}
hence
\begin{align*}
\begin{split}
\widetilde{f}_{m,n}(x+1)=& (\mathbf{S}_{0}(x+1)^{0}+\cdots+\mathbf{S}_{n-1}(x+1)^{n-1})+(\mathbf{S}_{n}(x+1)^{n}+\mathbf{S}_{n}(x+1)^{n+1}\cdots+\mathbf{S}_{n}(x+1)^{m}) \\
&+(\mathbf{S}_{n-1}(x+1)^{m+1}+\cdots+\mathbf{S}_{0}(x+1)^{m+n}).
\end{split}
\end{align*}


We are to compute $[x^r]\widetilde{f}_{m,n}(x+1)$. There are three
cases.

(i) $m+1 \leq r \leq m+n$. The coefficient
$[x^r]\widetilde{f}_{m,n}(x+1)$ comes from
$$\mathbf{S}_{m+n-r}(x+1)^{r}+\cdots+\mathbf{S}_{0}(x+1)^{m+n}.$$
Hence we have
\begin{eqnarray*}
[x^r]\widetilde{f}_{m,n}(x+1) &=& \sum_{i=0}^{m+n-r}\mathbf{S}_{i}{m+n-i\choose r}\\
 &=& \sum_{k=0}^{m+n-r} \sum_{i=k}^{m+n-r} {m\choose k}{n\choose k}{m+n-i\choose r}\\
&=& \sum_{k=0}^{m+n-r} \sum_{j=r}^{m+n-k} {m\choose k}{n\choose k}{j\choose r}\\
&=& \sum_{k=0}^{n}{m\choose k}{n\choose k}{m+n+1-k\choose r+1}.
\end{eqnarray*}

(ii) $n \leq r \leq m$. The coefficient
$[x^r]\widetilde{f}_{m,n}(x+1)$ comes from
$$(\mathbf{S}_{n}(x+1)^{r}+\cdots+\mathbf{S}_{n}(x+1)^{m})+(\mathbf{S}_{n-1}(x+1)^{m+1}+\cdots+\mathbf{S}_{0}(x+1)^{m+n}),$$
that is,
\begin{equation}\label{case2}
[x^r]\widetilde{f}_{m,n}(x+1)=\mathbf{S}_{n}\sum_{i=r}^{m}{k\choose
r}+\sum_{i=0}^{n-1}\mathbf{S}_{i}{m+n-i\choose r}.
\end{equation}
We claim that (\ref{case2}) equals to
$$\sum_{k=0}^{n}{m\choose k}{n\choose k}\sum_{j=r}^{m+n-k}{j\choose r}$$
as followms: for $0\le k\le n$ we collect terms with respect to
${m\choose k}{n\choose k}$. The first part of (\ref{case2})
contributes ${r\choose r}+\dots +{m\choose r}$, while the second
part contributes ${m+1\choose r}+\dots + {m+n-k\choose r}$.
 Hence
\begin{eqnarray*}
[x^r]\widetilde{f}_{m,n}(x+1) &=& \sum_{k=0}^{n}{m\choose k}{n\choose k}\sum_{j=r}^{m+n-k}{j\choose r}\\
 &=& \sum_{k=0}^{n}{m\choose k}{n\choose k}\sum_{j=r}^{m+n-k}{j\choose j-r}\\
&=&\sum_{k=0}^{n}{m\choose k}{n\choose k} {m+n+1-k\choose r+1}.
\end{eqnarray*}

(iii) $0 \leq r \leq n-1$. Now
$$[x^r]\widetilde{f}_{m,n}(x+1)=\sum_{i=r}^{n-1}\mathbf{S}_{i} {i\choose
r}+\sum_{i=n}^{m}\mathbf{S}_{n} {i\choose
r}+\sum_{i=0}^{n-1}\mathbf{S}_{i}{m+n-i\choose r} .$$

We collect terms with respect to ${m\choose k}{n\choose k}$. The
first summand contributes ${k\choose r}+\dots +{n-1\choose r}$ if
$0\le k\le r$, or ${k\choose r}+\dots +{n-1\choose r}$ if $r<k\le
n-1$. The second summand contributes ${n\choose r}+ \dots {m\choose
r}$, while the third summand contributes ${m+1\choose r}+\dots
+{m+n-k\choose r}$. In both two cases the sum can be combined into
$${m+n-k+1\choose r+1}-{k\choose r+1}$$ with $0\le k\le n-1$.
Hence
$$[x^r]\widetilde{f}_{m,n}(x+1)=\sum_{k=0}^n {m\choose k}{n\choose k} {m+n-k+1\choose r+1}- {m\choose k}{n\choose k} {k\choose r+1} $$

%
Note that $ {m\choose k}{n\choose k} {k\choose r+1}=0$ if $n\le r\le
m+n$. Hence in summary, we obtain that, for $0 \leq r \leq m+n$, the
coefficient of the term of degree $r$ is
\begin{equation}\label{eq4}
\sum_{k=0}^{n}{m\choose k}{n\choose k}{m+n-k+1\choose r+1}-{m\choose k}{n\choose k}{k\choose r+1}.
\end{equation}

\subsection{Both ends meet} The last piece of the proof is to compare
(\ref{eq3}) with (\ref{eq4}). We need the following generalization
of Vandermonde's identity:

\begin{lemma}\label{lem2} For all integers $m,n$ and  $q$, we have
$$
\sum_{k=0}^{n}\sum_{i=0}^{n-k}  \sum
_{j=0}^{k} {m+1\choose q-k+1}  {n+1\choose i+k+1} {i+j\choose j} {q-k\choose i+j}{k+i\choose k-j} =
\sum_{k=0}^{n} {m\choose k}{n\choose k}{m+n-k+1\choose q-k+1}
$$
\end{lemma}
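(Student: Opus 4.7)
My plan is to reduce both sides of the identity to the same closed expression
\[
\sum_{r}\binom{m}{r}\binom{n+1}{q+1-r}\binom{n+r}{r},
\]
using only Chu--Vandermonde convolution and the ``subset-of-subset'' identity $\binom{N}{A}\binom{A}{B}=\binom{N}{B}\binom{N-B}{A-B}$.

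First I would collapse the triple sum on the left. After rewriting $\binom{i+j}{j}\binom{k+i}{k-j}=\binom{i+j}{i}\binom{k+i}{i+j}$, the subset-of-subset identity turns this into $\binom{k+i}{i}\binom{k}{j}$, so the innermost $j$-sum becomes $\binom{k+i}{i}\sum_{j}\binom{k}{j}\binom{q-k}{i+j}$; with the substitution $s=i+j$, Chu--Vandermonde evaluates this to $\binom{k+i}{i}\binom{q}{k+i}$. Setting $\ell=k+i$ in the $i$-sum and interchanging the $k$- and $\ell$-summations, the inner Chu--Vandermonde $\sum_{k}\binom{m+1}{q+1-k}\binom{\ell}{k}=\binom{m+\ell+1}{q+1}$ brings the left side to $\sum_{\ell}\binom{n+1}{\ell+1}\binom{q}{\ell}\binom{m+\ell+1}{q+1}$. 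To reach the target form, I would next split $\binom{m+\ell+1}{q+1}=\sum_{a}\binom{m}{a}\binom{\ell+1}{q+1-a}$ by Vandermonde, rewrite $\binom{n+1}{\ell+1}\binom{\ell+1}{q+1-a}$ as $\binom{n+1}{q+1-a}\binom{n-q+a}{\ell-q+a}$ by the subset-of-subset identity, and collapse the remaining $\ell$-sum to $\binom{n+a}{a}$ by one last Chu--Vandermonde (substitute $t=\ell-q+a$).

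The right side is handled in a mirror fashion: I would split $\binom{m+n-k+1}{q-k+1}=\sum_{b}\binom{m-k}{b}\binom{n+1}{q-k+1-b}$ by Vandermonde, use the subset-of-subset identity in the form $\binom{m}{k}\binom{m-k}{b}=\binom{m}{k+b}\binom{k+b}{k}$, introduce $r=k+b$, and collapse the remaining $k$-sum $\sum_{k}\binom{n}{k}\binom{r}{k}=\binom{n+r}{r}$ by a single Chu--Vandermonde. The result is precisely the desired target expression, matching the simplified left side.

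The main obstacle is identifying the correct Vandermonde splittings on each side that funnel the two sums into this common canonical form; once the target $\sum_{r}\binom{m}{r}\binom{n+1}{q+1-r}\binom{n+r}{r}$ is spotted, every individual step is a routine binomial manipulation, and the whole proof becomes a careful but mechanical bookkeeping of sum interchanges.
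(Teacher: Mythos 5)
Your proof is correct, and it takes a genuinely different route from the paper's after the first step. Both arguments begin identically: the innermost $j$-sum is collapsed to ${k+i\choose k}{q\choose k+i}$ (this is exactly the paper's first auxiliary identity). From there the paper continues strictly ``inside out'': it next performs the $i$-sum to get ${q\choose k}{q+n-k+1\choose n-k}$, and then must prove the resulting single-sum identity $\sum_{k} {m+1\choose q-k+1}{q\choose k}{q+n-k+1\choose n-k}=\sum_{k}{m\choose k}{n\choose k}{m+n-k+1\choose q-k+1}$ by a rather long chain of trinomial-revision and Vandermonde steps. You instead substitute $\ell=k+i$, swap the order of summation so that the $k$-sum is a one-line Chu--Vandermonde, reducing the left side to $\sum_{\ell}{n+1\choose \ell+1}{q\choose \ell}{m+\ell+1\choose q+1}$, and then funnel both sides into the common canonical form
$$\sum_{r}{m\choose r}{n+1\choose q+1-r}{n+r\choose r},$$
each reduction being a single Vandermonde split, one trinomial revision, and one Vandermonde collapse. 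This buys a more symmetric and somewhat shorter argument, at the cost of having to guess the common target form; the paper's version avoids that guess but pays with the lengthy third identity. One caveat applies to both proofs equally: steps that use the symmetry ${q\choose j}={q\choose q-j}$ (your $\binom{q}{t+q-a}=\binom{q}{a-t}$, and the paper's analogous manipulations with ${q\choose k+i}$) are only valid for $q\ge 0$, whereas the application requires $q$ down to $-1$; that boundary case should be checked separately (it is easy, as almost all terms vanish), and the blanket claim ``for all integers $m,n,q$'' should be read with the same grain of salt in either treatment.
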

Note that $m+n+1$ $q$'s from $-1$ to $m+n-1$ correspond to those
coefficients to be compared in the polynomial
$\widetilde{f}_{m,n}(x+1)$ of degree $m+n$. When $q=m+n$, we have
the Vandermonde identity. For other $q$'s both sides are zero.

\begin{proof}
We write the left hand side of the identity as $$ \sum_{k=0}^{n}
{m+1\choose q-k+1}\sum_{i=0}^{n-k}  {n+1\choose i+k+1} \sum
_{j=0}^{k} {i+j\choose j} {q-k\choose i+j}{k+i\choose k-j},$$ and
the proof is done by `summation from inside' once we have the
following three identities.
\begin{eqnarray}
 \sum
_{j=0}^{k} {i+j\choose j} {q-k\choose i+j}{k+i\choose
k-j}={k+i\choose k}{q\choose k+i},\\
\sum_{i=0}^{n-k}  {n+1\choose i+k+1}{k+i\choose k}{q\choose k+i}={q\choose k}{q+n-k+1\choose n-k}, \\
\sum_{k=0}^{n} {m+1\choose q-k+1}{q\choose k}{q+n-k+1\choose n-k}= \sum_{k=0}^{n} {m\choose k}{n\choose k}{m+n-k+1\choose q-k+1}.
\end{eqnarray}
The first two identities can be proved without too much difficulties
by using Vandermonde's identity. Here we give the proof of the third
one. From the left hand side (LHS) one has
\begin{eqnarray*}
LHS &=& \sum_{k=0}^{n}{m+1\choose q+1-k}{q\choose k} \sum_{j=0}^{n-k}{n\choose j}{q+1-k\choose n-k-j}\\
&=& \sum_{k=0}^{n} \sum_{j=0}^{n-k}{m+1\choose m-q+n-j}{m-q+n-j\choose n-k-j}{q\choose k}{n\choose j}\\
&=& \sum_{j=0}^{n} \sum_{k=0}^{n-j}{m+1\choose m-q+n-j}{m-q+n-j\choose n-k-j}{q\choose k}{n\choose j} = \star,
\end{eqnarray*}
where the second equality is from the fact
$${m+1\choose q+1-k}{q+1-k\choose n-k-j}={m+1\choose m-q+n-j}{m-q+n-j\choose n-k-j}.$$
Now
\begin{eqnarray*}
\star &=& \sum_{t=0}^{n}{m+1\choose q+1-t}{m+t\choose t}{n\choose t}\\
&=& \sum_{t=0}^{n} \sum_{k=0}^{t}{m+1\choose q+1-t}{m\choose k}{t\choose t-k}{n\choose t}\\
&=&  \sum_{t=0}^{n} \sum_{k=0}^{t}{m+1\choose q+1-t}{m\choose k}{n\choose n-k}{n-k\choose n-t}\\
&=& \sum_{k=0}^{n} \sum_{t=k}^{n}{n-k\choose n-t}{m+1\choose q+1-t}{m\choose k}{n\choose k}\\
&=& \sum_{k=0}^{n} \sum_{s=0}^{n-k}{n-k\choose s}{m+1\choose q+1-k-s}{m\choose k}{n\choose k}\\
&=& \sum_{k=0}^{n} \sum_{s=0}^{q+1-k}{n-k\choose s}{m+1\choose q+1-k-s}{m\choose k}{n\choose k}\\
&=& \sum_{k=0}^{n} {m+1+n-k\choose q+1-k}{m\choose k}{n\choose k}\\
&=& RHS.
\end{eqnarray*}

\end{proof}

Finally, let $q=m+n-1-r$ in Lemma \ref{lem2}. It can be seen that
(\ref{eq3}) is identical with (\ref{eq4}). In conclusion, we obtain
$$\widetilde{g}_{m,n}= \sum_{k=0}^{n}{m\choose k}{n\choose k}x^{k},$$
and the Theorem 5 is proved. \qed

\section{Concluding remarks}
%
%
To our knowledge there are very few polytopes with explicit toric
$f$- or $g$-polynomials computed. In this paper we give a new
example. Another possible approach, which we did not work on, is the
\emph{short toric polynomial} by Hetyei~\cite{H}. The short toric
polynomial contains the same information as Stanley's pair of toric
polynomials without the interwined definitions.

A curious observation is that, for the simplex $\Delta_m \times
\Delta_n$ with $m\ge n$ its Ehrhart $h^*$-polynomial is
$$h^*_{\Delta_m \times \Delta_n}=\sum_{k=0}^n{m-1\choose
k}{n-1\choose k}x^k,$$  which is only of slight difference from our
$g$-polynomial in Theorem 5. We hope our example can reveal the tip
of an iceberg hidden under.

An apparent generalization is to see if the lattice path matroid
polytope induced from a border strip still has a nice toric
$g$-polynoimial. However, initial observations reveal that the toric
$g$-polynomial of $P_{\alpha,\beta,\gamma}$, induced from the border
strip with $a$ boxes to the right, adding $b-1$ boxes up then $c-1$
boxes to the right, is \emph{not}
$$ g_{\alpha,\beta,\gamma}(x)=\sum_k{\alpha-1\choose k} {\beta-1\choose k} {\gamma-1\choose k}x^{k}. $$
It will be interesting if one can find other interesting families of
skew shapes $\lambda / \mu$ bounded by two lattice paths whose
corresponding $P_{\lambda / \mu}$ can have nice properties.

\section*{Acknowledgement}
The first named author would like to thank Sangwook Kim for
introducing this problem and Vic Reiner for discussions.



%

\end{document}